\definecolor{darkblue}{rgb}{0.0,0.0,0.85}
\definecolor{darkgreen}{rgb}{0,0.5,0}
\begin{document} 
 
\author[B. Cheng \&\ A. Mahalov]{Bin Cheng and Alex Mahalov}
\title[Time-averages of Geophysical Fluid Dynamics]{Time-averages of Fast Oscillatory Systems in Three-dimensional Geophysical Fluid Dynamics and Electromagnetic Effects}

\newtheorem{theorem}{Theorem}
\newtheorem{lemma}[theorem]{Lemma}
\newtheorem{corollary}[theorem]{Corollary}

\theoremstyle{definition}
\newtheorem{definition}[theorem]{Definition}
\newtheorem{proposition}[theorem]{Proposition}
\newtheorem{remark}[theorem]{Remark}

\numberwithin{equation}{section}
\numberwithin{theorem}{section}

\def\mR{{\mathbb R}}
\def\mS{{\mathbb S}^2}
\def\H{{  H}}
\def\LL{{{  L}^2}}
\def\cC{{\mathscr C}}
\def\SS{\boldsymbol S}

\def\trsp{^\intercal}
\def\ssperp{{\scriptstyle{\perp}}}
\def\vu{{\mathbf u}}
\def\vv{{\mathbf v}}
\def\vb{{\mathbf b}}
\def\vce{{\vec{e}}}
\def\vuh{{\mathbf u_h}}
 \def\ur{w}
 \def\vFe{{\mathbf F}_{\textnormal{ext}}}
\def\vg{{\mathbf g}}

\def\lgt{\phi}
\def\lat{\theta}
\def\nm{\vec{n}}
\def\tg{\vec{\tau}}
\def\bc{\big|_{\partial\Omega}}
\def\Bc{\Big|_{\partial\Omega}}
\def\bcSH{\big|_{\partial\OmegaSH}}
\def\bcB{\big|_{\partial\OmB}}
\def\ep{\varepsilon}
\def\pa{\partial}
\def\va{{\mathbf a}}
\def\ver{{\mathbf e_r}}
\def\vez{{\mathbf e_z}}
\def\ve{{\mathbf e}}
\def\vet{{\mathbf e_\lat}}
\def\vep{{\mathbf e_\lgt}}
 \def\vutil{\widetilde{\vu}}
\def\tPsi{\tilde{\Psi}}
\def\sX{{\mathsf{X}}}
\def\sXh{{\mathsf{X}_h}}

\def\pt{\pa_t}
\def\pap{{\pa\over\pa\lgt}}
\def\pat{{\pa\over \pa {\lat}}}
\def\pr{{\pa\over\pa r}}
\def\prs{\pa_r }
\def\dv{\textnormal{\,div\,}}
\def\curl{\textnormal{curl\,}}
\def\grad{\nabla} 
\def\rgrad{\nabla^{\ssperp}} 
\def\nt{\nabla\!\times\!}
\def\dvh{\textnormal{\,div}_{h}}
\def\curlh{\textnormal{curl}_{h}}
\def\gradh{\grad_{h}} 
\def\rgradh{\rgrad_{h}}
\def\Deltah{\Delta_{h}}
\def\intT{ \int_{ 0}^{ T}}
\def\intO{\displaystyle\int_\Omega}
\def\intr{\int_{\Ri}^{\Ro}}
\def\intS{\int_{\mS}}

\def\id{\textnormal{id}}
\def\lsm{\stackrel{{}_<}{{}_\sim}}
\def\leC{\le C}
 \def\ccdot{\cdot}
\def\na{\nabla}
\def\cn{  \cdot\!  \nabla}
\def\pn{\nm\cn}
\def\Pzonal{{\displaystyle\operatorname*{\displaystyle\Pi}_{\scriptscriptstyle\textnormal{zonal}}}}
\def\sL{{\mathscr L}}
 \def\cLh{{\mathcal L}_h}
\def\cP{{\mathcal P}}
\def\cQ{{\mathcal Q}}
\def\cPh{\cP_h}
\def\cQh{\cQ_h}
   
\def\Ri{{1 -\thick}}
\def\Ro{{1+\thick}}
\def\thick{\delta}
\def\Projh{\textnormal{Proj}_h}

\def\va{{\bf a}}
\def\vb{{\bf b}}
\def\vgo{\vg\Big|_{r=1+\thick}}
\def\vgi{\vg\Big|_{r=1-\thick}}
\def\vgoi{\vg\Big|_{r=1\pm\thick}}

\def\II{{\mathscr{X}}}
\def\OmegaSH{\Omega}
\def\tangent{/\!/}
\def\averM{{M}_0} 
\def\MT{(1+C\mu T)M_0+C_\alpha TM_0^2 }
\def\MTold{(M_0+TM_0^2)}
\def\ttan{{\textnormal{tan}}}
  
\newcommand{\aver}[1]{{\overline{#1}}}
\newcommand{\aav}[1]{{\widetilde{#1}}}
\newcommand\FT[1]{\widehat{#1}}
\newcommand\rev[1]{#1}
\newcommand\ltwo[1]{\big\langle#1\big\rangle}
 \newcommand\Lp[2]{{L^{#1}(#2)}}

 \def\bpm{\begin{pmatrix}}
\def\epm{\end{pmatrix}}
\def\be{\begin{equation}}
\def\ee{\end{equation}}
\def\ba{\begin{aligned}}
\def\ea{\end{aligned}}
\def\wtd{\widetilde}

 \begin{abstract} Time-averages are common observables in analysis of experimental data and numerical simulations of physical systems. We will investigate,  from the angle of  partial differential equation   analysis,     some  oscillatory geophysical fluid dynamics in three dimensions: Navier-Stokes  equations    in a fast rotating, spherical shell,  and  Magnetohydrodynamics  subject to strong Coriolis and Lorentz forces. Upon averaging their oscillatory solutions in time, interesting patterns such as zonal flows can emerge. More rigorously, we will   prove that, when the restoring forces are strong enough,  time-averaged solutions stay close to the null spaces of the wave operators, whereas the solutions themselves can be arbitrarily far away from these subspaces. 

Keywords: Rotating fluids, Navier-Stokes equations, Magnetohydrodynamics, time-averages, spherical shell domain.

Date: October, 2014.\newline

\noindent{\scshape Bin Cheng } \newline
             Department of Mathematics\newline
University of Surrey\newline
Guildford, GU2 7XH, United Kingdom\newline\verb+b.cheng@surrey.ac.uk+\newline

\noindent{\scshape Alex Mahalov} \newline
School of Mathematical and Statistical Sciences\newline
    Arizona State University, Wexler Hall (PSA)\newline
    Tempe, Arizona\quad 85287-1804\quad USA\newline\verb+mahalov@asu.edu+
 \end{abstract}
\maketitle

 \bigskip \section{\bf Introduction} 
 In many geophysical fluid dynamical systems,    solutions exhibit fast oscillatory behaviors due to strong energy-preserving, restoring mechanisms --- a typical example being the Coriolis force in fast rotating planets and stars.  Time integration averages out the oscillatory part of the solution, which leads to emerging of interesting patterns that are relevant in a longer time scale.

 A straightforward framework is introduced in \cite{chengmahalov2012} for proving that the time-average of the solution stays close to the null space of the large, skew-self-adjoint operator in the  partial differential equation (PDE)  system.  A particular application of this  framework can be found in \cite{chengmahalov2013} for two-dimensional (2D) Euler equations on a fast rotating sphere. 

In this article, we study two  PDE systems in three-dimensional (3D) spatial domains that are important models of fast oscillatory, geophysical fluid dynamics.
 
 The first one, \eqref{NS:vec} -- \eqref{BC:Navier}, governs    viscous, barotropic fluids confined within a fast rotating, spherical shell that models the global atmospheric circulation on Earth and other planets. On the boundary, the velocity field   either   satisfies conditions in terms of shear stress or is simply fixed. We prove in Theorem \ref{thm:shell} that, with additional spatial-averaging in the radial direction, time-averages of the solution are $O(\ep)$ close to zonal flows (i.e. motions in the east-west direction). Here, $\ep$ denotes the Rossby number, a dimensionless parameter measures the ratio between typical magnitudes of inertia  and Coriolis force. This theoretical result is consistent with many numerical studies and observations.  
For a partial list of computational results, we mention \cite{galperinnakanoetal2004,nakanohasumi2005} for 3D models, \cite{vallismaltrud1993, chopolvani1996, nozawayoden1997, huanggalperinetal2001, sukorianskyetal2002, galperinsukorianskyetal2006} for 2D models, and references therein. Note that many of these computations attempt to simulate turbulent flows with sufficiently high resolutions. Zonal structures in these numerical results are either directly noticeable by naked eyes or after some time-averaging procedures.

On the other hand, we have observed  zonal flow patterns (e.g. bands and jets) on giant planets for hundreds of years, which has attracted considerable interests recently thanks to spacecraft missions and the launch of the Hubble Space Telescope (e.g. \cite{garciasanchez2001}, \cite{porcowestetal2003}). In \cite{jupiterlink2001},  the banded structure is directly observable in a composite view of the   Jovian atmosphere captured by the Cassini spacecraft. There are also observational data in the oceans on Earth showing persistent zonal flow patterns (e.g. \cite{roden1998,roden2000, maximenkobangetal2005}).

It is worth mentioning that time-averaging and the more general time-filtering are commonly used for denoising of  observational and computational data. Indeed, such post-processing is necessary for the emergence of zonal flow patterns in some of the above literature, e.g. \cite[Fig 4,6 and 9]{nakanohasumi2005}. We also point out that the zonal flow pattern would not arise in a model without the meridional variation of the Coriolis parameter. Such variation  is due to the non-flat geometry of the spatial domain, which is why  we use   the \emph{entire} spherical shell as  the   domain in this article. Most analytical work in literature adopts the $\beta$-plane approximation,   focusing on a narrow strip near a fixed latitude, which essentially is a linear approximation of the spherical case.    Fourier series then become applicable in the $\beta$-plane approximation but is not so in the whole spherical shell that is studied in this article.

Mathematical studies of deterministic and stochastic 3D rotating Navier-Stokes equations including resonances were done in \cite{BMN1997AA,BMN1999IUMJ,BMN2001IUMJ,flandolimahalov2012} with uniformly large rotation. Recently,  \cite{kuksinmaiocchi2014}     proves some interesting $\beta$-plane effects using the randomly forced quasi-geostrophic equation.


The second PDE system \eqref{vu:vb:MHD}   governs rotating Magnetohydrodynamics (MHD) in the whole three-dimensional Euclidean space $\mR^3$ subject to two strong restoring forces: Coriolis force and Lorentz force. They induce the magneticstrophic waves, also known as rotating Alfv\'en waves (\cite{davidson2001}). We refer to \cite{mahalov2014} for ionospheric applications. We prove in Theorem \ref{thm:MHD} that time-averages of the solution vanish at order of fractional powers of $\ep$ when measured in $L^s$ norms ($s>6$). This result suggests that there is dispersion in the time-averages, although we do not impose any spatial decay on the initial data like in the classical dispersive wave theory.

The rest of this article is organized as following. The formulations and main results are introduced in Sections \ref{sec:result:1}, \ref{sec:result:2}. Then,  
in Section \ref{sec:aver}, we apply the barotropic averaging \eqref{def:aver} on the 3D Navier-Stokes equations \eqref{NS:vec} and reveal the close connection to the 2D Navier-Stokes equations on a sphere. In Section \ref{sec:shell:thm}, we prove the main Theorem \ref{thm:shell} using the time-averaging tools devloped in \cite{chengmahalov2013, chengmahalov2012}. In Section \ref{sec:MHD}, we study the MHD system \eqref{vu:vb:MHD}  and prove Theorem \ref{thm:MHD} by using Sobolev-type inequalities. Finally, in Section \ref{sec:app} the Appendices, we give a geometric proof of Proposition \ref{thm:Navier} regarding the Navier boundary conditions and also prove  energy and enstrophy estimates   for the Navier-Stokes equations in a shell subject to Navier boundary conditions with $\lambda\ge0$ and $\vg\equiv{\bf0}$.
\section*{Acknowledgements}
We would like to thank Boris Galperin for stimulating and insightful discussion in the geophysical context.

AM is supported, in part, by the AFOSR, grant number: FA9550-15-1-0096.

 \bigskip \section{\bf Navier-Stokes equations in a rotating shell: formulation and main results}\label{sec:result:1}
Let $(x,y,z)$ denote the usual Cartesian coordinates and
 let spherical coordinates $(r,\lat,\lgt)$ denote the radius, {\it colatitude} (i.e. inclination from the positive half of the $z$ axis) and longitude respectively. The spatial domain is a thin, spherical shell
 \be\label{def:OmegaSH}\ba\OmegaSH&:=\bigl\{(x,y,z)\,\big|\,\sqrt{x^2+y^2+z^2}\in(\Ri,\Ro)\bigr\}\\&=\bigl\{(r,\lat,\lgt)\,\big|\,r\in(\Ri,\Ro) ,\lat\in[0,\pi],\lgt\in[0,2\pi]\bigr\}.\ea\ee
 In short, we can write $\OmegaSH=(\Ri,\Ro)\times\mS$ with $\mS$ denoting the unit sphere.
 
Let $\ver,\vet, \vep$ denote the locally orthogonal unit vectors along the increasing directions of $r,\theta,\phi $ respectively --- and they are orientated according to the right hand rule, i.e. $(\ver\times\vet)\cdot\vep=1$. Similarly define $\ve_x,\ve_y, \vez$ in terms of the Cartesian coordinate system. The unknown is velocity field  $\vu$.   The   Coriolis force  is given by
\[{\mathbf F}_\text{Coriolis}={1\over\ep} \vu\times\ver \cos\lat={z\over\ep} \vu\times\ver\]
where $\ep$, called the Rossby number, equals the ratio of the spatial domain's rotating period over the inertial time scale (usually $0.01\sim0.1$ for the Earth). Note we have adopted  such geophysical version of the Coriolis force that differs from the laboratory version, ${1\over\ep}\vu\times\vez$. In other words,  we neglect the radial component of the velocity and also neglect the radial component of the Coriolis force. See \cite{whitehoskinsetc2005} for detailed justification.

Let $q$ denote the pressure and constant $\mu$   the viscosity. The sum of other   external forces is denoted by $\vFe$. 
Then,   the incompressible Navier-Stokes equations  under the Coriolis force reads (\cite{arnoldkhesin1998, chorinmarsden1993, pedlosky1987})
\be\label{NS:vec}\left\{\ba\pt\vu+\nabla_\vu\vu+\grad q&={1\over\ep} \vu\times\ver \cos\lat +\mu\Delta\vu+\vFe,\\ \dv\vu&= 0,\ea\right.\ee
subject to the Navier boundary conditions which consist two parts,
\begin{subequations} \label{BC:Navier} \begin{align} \label{BC:zero:f} \vu\ccdot\nm\bc &=0,&\text{(zero-flux)}\\\label{BC:shear}\Big[\SS  \nm+\lambda\vu \Big]_\ttan\bigg|_{\partial\Omega}&=\vg,&\text{(given shear stress)}\end{align}\end{subequations} 
with scalar $\lambda=\lambda(t,{\bf x})\ge0$ and vector $\vg =\vg (t,{\bf x})$    given. (The physical significance of having positive $\lambda$ is shown in Proposition \ref{prop:energy:decrease} and its proof.) Here, $\nm$ denotes the outward normal at $\pa\OmegaSH$ and subscript ``$\ttan$'' indicates the tangential component, e.g. $\vv_\ttan:=\vv-(\vv\ccdot\nm)\nm$. The stress tensor $\SS$ is defined as
\begin{gather*}\SS  := \nabla\vu+(\nabla\vu)\trsp\\ \text{where}
\quad\nabla\vu:=\bpm\pa_{x }u_1&\pa_{y}u_1&\pa_{z}u_1\\
\pa_{x }u_2&\pa_{y}u_2&\pa_{z}u_2\\\pa_{x }u_3&\pa_{y}u_3&\pa_{z}u_3\epm.\end{gather*}
Throughout this article, vectors are treated as $3\times1$ matrices so that for vector fields $\vu,\vu',\vu''$,
\begin{subequations}\label{nabla:vu:ids}\be\label{nabla:vu:id}(\nabla\vu)\vu'=\vu'\!\cn\vu,\ee
\be\label{nabla:trsp:vu:id}\big((\nabla\vu)\trsp\vu'\big)\ccdot\vu''=\big(\vu''\!\cn\vu\big)\ccdot\vu'.\ee\end{subequations}

One can also impose the Dirichlet boundary condition, $\vu \bc =\vg'$
with $\vg'$ given. This apparantly includes the non-slip boundary condition $\vu \bc=0$.

Before stating the main result,  some definitions are in order. First, the Sobolev $L^2$ norm for a scalar or vector function $f$ is defined as 
\[\|f\|_{L^2(\Omega)}:=\sqrt{\intO|f|^2 }.\] Second, we use the so-called ``baratropic averaging'' to reduce the 3D velocity field $\vu$ to a 2D field $\aver{\vu}$ that is tangent to $\mS$. It turns out that certain weight in the integral is convenient. From a physical perspective, the flux of $\aver{\vu}$ going through a side of the area element $\sin\theta\, d\theta d\phi$ equals the (unweighted) radial average of  the momentum flux through the corresponding vertical  cross-section  of the volume element $ r^2\sin\theta\, d\theta d\phi dr$. To this end, for velocity field $\vu\in L^2(\OmegaSH)$, define its horizontal component $$\vuh:=\vu-(\vu\ccdot\ver)\ver$$ and define its baratropic averaging
\be\label{def:aver} \aver{\vu} :={1\over2\thick}\int_\Ri^\Ro r\vuh  \,dr.\ee
%
 
Next, define $\Pzonal:L^2(\mS)\mapsto L^2(\mS)$ as the zonal-mean projector that projects horizontal velocity fields onto the subspace of zonal flows,
\[\Pzonal\aver{\vu}(\lat,\lgt):=\left({1\over2\pi}\int_0^{2\pi}\aver{\vu}(\lat,\lgt)\ccdot\vep\,d\lgt\right)\vep.\]

 Lastly,   let $C$ denote some universal constant and we add subscript(s) to it,
 e.g. $C_k$, to emphasize its dependence on another parameter.

 We now state the main result subject to homogenous boundary condition $\vg=0$. We will skip the case of nonhomogenous boundary condition $\vg\ne0$ because of the intimate connection between $\vg$ and the external forcing $\vFe$ that is discussed  in Subsection \ref{subs:physical} where such connection is explained also  in a more physical context. 
 
\begin{theorem}[Homogeneous boundary conditions]\label{thm:shell} Consider 3D Navier-Stokes equations \eqref{NS:vec} in a spherical shell $\OmegaSH$ defined in \eqref{def:OmegaSH},  subject to the Navier boundary conditions \eqref{BC:Navier} with $\vg \equiv{\bf0}$. Let $\mu<1/2$ and $\thick<1/2$. Define 
\[\averM:= {1\over\sqrt{2\thick}}{\|\vu_0\|_{L^2(\Omega)}},\]
that indicates the averaged size of $\vu_0$.
 
Then, for any weak solution $\vu\in L^\infty([0,\infty);L^2(\Omega))\cap L^2([0,\infty);H^1(\Omega))$, its barotropic average $\aver{\vu}$ as defined in \eqref{def:aver} satisfies, for   ${\alpha}<-4$,
\be\label{zonal:est}\ba&\left\|(1-\Pzonal)\intT\aver{\vu} \,dt\right\|_{H^{{\alpha}}(\mS)}\\&\le \,\ep\,   \big[\MT+ M_\text{ext}\big],\ea\ee
where $M_{\textnormal{ext}}:=\left\|(1-\Pzonal)\intT \aver{ \vFe}\right\|_{H^{{\alpha+2}}(\mS)}$ and the $H^{\alpha}(\mS)$ norm  can be defined using spherical harmonics (c.f. Definition \ref{def:Hk:series} and relation \eqref{dual:Ha}. Note for negative $\alpha$, the $H^{\alpha}$ norm dampens high wave number modes). The constant $C_\alpha$ depends solely on $\alpha$ and is otherwise independent of $\ep$, $\thick$, $\mu$, $M_0$, $T$.\end{theorem}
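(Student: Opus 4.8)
The plan is to collapse the three-dimensional problem to a two-dimensional one on $\mS$ by the barotropic average, to identify the skew-adjoint wave operator governing the fast oscillations, and then to run the time-averaging duality argument of \cite{chengmahalov2012,chengmahalov2013}. First I would apply the averaging operator $\aver{\,\cdot\,}$ of \eqref{def:aver} termwise to \eqref{NS:vec}. Because $\cos\lat$ is independent of $r$ and $\vu\times\ver=\vuh\times\ver$ is horizontal, the Coriolis term averages \emph{exactly} to $\tfrac1\ep\cos\lat\,(\aver{\vu}\times\ver)$; together with Section~\ref{sec:aver} (in particular that the zero-flux condition \eqref{BC:zero:f} forces $\dvh\aver{\vu}=0$) this yields an evolution equation on $\mS$ of the schematic form $\pt\aver{\vu}+\aver{\nabla_\vu\vu}+\gradh\aver q=\tfrac1\ep\cos\lat\,(\aver{\vu}\times\ver)+\mu\,\aver{\Delta\vu}+\aver{\vFe}$. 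I will pair this equation only with \emph{divergence-free} tangent test fields, so that the pressure gradient drops out and no closure of the averaged nonlinearity $\aver{\nabla_\vu\vu}$ is needed.

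Second, I would analyze the wave operator $\mathcal{L}\vv:=\cP_h\bigl(\cos\lat\,(\vv\times\ver)\bigr)$, where $\cP_h$ is the Leray projection onto divergence-free tangent fields. Writing $\vv=\ver\times\gradh\psi$ and $\omega=\Deltah\psi$, a direct computation reduces the curl of the Coriolis term to $-\pa_\lgt\psi$, so $\mathcal{L}$ is conjugate to $-\pa_\lgt\Deltah^{-1}$ and is therefore diagonalized by spherical harmonics $Y_\ell^m$ with eigenvalues $\tfrac{i m}{\ell(\ell+1)}$. Two consequences are crucial: its null space consists precisely of the $m=0$ (zonal) modes, i.e.\ the range of $\Pzonal$; and on the non-zonal modes $\mathcal{L}^{-1}$ exists but loses exactly two derivatives, $\|\mathcal{L}^{-1}\eta\|_{H^{\alpha}(\mS)}\le C\|\eta\|_{H^{\alpha+2}(\mS)}$ whenever $(1-\Pzonal)\eta=\eta$. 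This is the origin of the shift from $H^\alpha$ to $H^{\alpha+2}$ in the statement.

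Third comes the averaging-in-time step. Given a non-zonal divergence-free test field $\eta$, I set $\chi:=\mathcal{L}^{-1}\eta$, pair the evolution equation with $\chi$, and integrate over $[0,T]$. Skew-adjointness of $\mathcal{L}$ moves the singular $\tfrac1\ep$ factor onto $\chi$ and produces $\tfrac1\ep\ltwo{(1-\Pzonal)\intT\aver{\vu},\,\eta}$ on one side, against which the remaining terms --- the endpoint difference $\aver{\vu}(0)-\aver{\vu}(T)$ and the time integrals of advection, viscosity and forcing --- are paired with $\chi$. Solving for the pairing and taking the supremum over $\eta$ with $\|\eta\|_{H^{-\alpha-2}(\mS)}\le1$ yields the $H^\alpha$ norm on the left and the explicit factor $\ep$ on the right, the derivative loss of $\mathcal{L}^{-1}$ being exactly what converts the $\|\chi\|$ bounds into $H^{\alpha+2}$ norms.

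Finally I would estimate each contribution using the energy bound $\|\aver{\vu}(t)\|_{L^2(\mS)}\le CM_0$ coming from the dissipative estimate of the Appendix (valid for $\lambda\ge0$, $\vg\equiv\mathbf0$; here $\thick<1/2$ keeps the weight $r$ comparable to $1$). The endpoint terms contribute $O(M_0)$ since $\alpha+2<0$ makes $\|\cdot\|_{H^{\alpha+2}}\le\|\cdot\|_{L^2}$; the viscous term $\mu\intT\Deltah\aver{\vu}$ contributes $O(\mu TM_0)$, which requires $\alpha+4\le0$ so that the two derivatives from $\Deltah$ are absorbed into $L^2$ --- this is precisely the hypothesis $\alpha<-4$; the quadratic advection, written as $\cP_h\dvh(\aver{\vu}\otimes\aver{\vu})$, is controlled by $\|\aver{\vu}\|_{L^2}^2$ through the product embedding $L^1(\mS)\hookrightarrow H^{\alpha+3}(\mS)$ (again needing $\alpha<-4$), giving $O(TM_0^2)$; and the forcing contributes exactly $M_{\textnormal{ext}}$. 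I expect the main obstacle to be the second step together with the averaging remainders of Section~\ref{sec:aver}: making the spectral picture of $\mathcal{L}$ rigorous on the \emph{whole} sphere, where --- unlike on the $\beta$-plane --- no translation-invariant Fourier structure is available, and checking that the discrepancy between $\aver{\nabla_\vu\vu}$ and a genuinely closed $2$D nonlinearity remains a quadratic object bounded by $M_0^2$ in the relevant negative norm. The low regularity of weak solutions causes no trouble, precisely because the entire argument is cast as a duality pairing against the smooth fields $\chi=\mathcal{L}^{-1}\eta$.
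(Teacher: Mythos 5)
Your overall architecture coincides with the paper's: barotropic averaging to produce a 2D equation on $\mS$; the wave operator $\cLh\aver{\vu}=\cPh(\aver{\vu}\times\ver\cos\lat)$ whose kernel is the zonal subspace and whose inverse on non-zonal modes loses exactly two derivatives (this is the paper's Lemma \ref{lm:sphere}, imported from \cite{chengmahalov2013}; your spectral picture with eigenvalues $im/(\ell(\ell+1))$ on $Y_\ell^m$ is the correct substance of that lemma); then a time-averaged duality argument with term-by-term estimates, with the same $\alpha<-4$ bookkeeping. However, there is a genuine gap in your treatment of the viscous term. You silently replace $\mu\aver{\Delta\vu}$ by $\mu\Deltah\aver{\vu}$, but radial averaging does not commute with the 3D Laplacian: the paper's Lemma \ref{lemma:Navier-Stokes:average} shows $\cPh\aver{\Delta\vu}=\cPh\Deltah\aver{r^{-2}\vuh}+2\cPh\aver{r^{-1}\prs\vu}$, where the second term arises from the boundary terms $\pr(r\vuh)\big|_{\Ri}^{\Ro}$ produced when $\Delta\vu=-\curl\curl\vu$ is integrated in $r$, and the Navier conditions \eqref{Navier:kappa:SH} with $\vg\equiv{\bf0}$ are exactly what evaluate it. This extra term is \emph{not} controlled by $\|\vu\|_{L^2(\Omega)}$ alone: the paper bounds its time integral (the term $A_2$ in Section \ref{sec:shell:thm}) by Cauchy--Schwarz in time together with the enstrophy estimate \eqref{grad:u:est}, $\int_0^T\|\nabla\vu\|^2_{L^2(\Omega)}\,dt\le\|\vu_0\|^2\big(\tfrac1\mu+CT\big)$, from the Appendix. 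This is precisely where the hypothesis $\vu\in L^2([0,\infty);H^1(\Omega))$, the sign condition $\lambda\ge0$, and the $(1+C\mu T)M_0$ portion of the final constant enter. Your proposal has no counterpart of this term, so the averaged equation you estimate is not the correct one, and the specific structure of the Navier boundary conditions is never used.

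A second, smaller gap concerns the advection. In your final estimates you write the averaged nonlinearity as $\cPh\dvh(\aver{\vu}\otimes\aver{\vu})$; this identity is false, since the radial average of a product is not the product of the averages (you flag this discrepancy as ``the main obstacle'' but then estimate the closed 2D form anyway, leaving the discrepancy unbounded). The paper never closes the nonlinearity: it estimates the 3D object $\cPh\aver{\grad_\vu\vu}$ directly by duality, unwinding the pairing with a divergence-free test field $\vuh'$ into the 3D integral $\int_\Omega r\big[\nabla\ccdot(\vu\otimes\vu)\big]\ccdot\vuh'$, integrating by parts in $\Omega$ using the zero-flux condition $\vu\ccdot\nm\bc=0$, and bounding by $\|\vu\|^2_{L^2(\Omega)}\,\|\nabla(r^{-1}\vuh')\|_{L^\infty(\Omega)}\le C_\beta\,2\thick\,\averM^2\,\|\vuh'\|_{H^{2+\beta}(\mS)}$ for $\beta>0$, which is the same $\alpha<-4$ threshold you identified. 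Your duality framework accommodates this fix verbatim (pair the 3D divergence form, not a 2D closure, against the smooth test field), but as written your quadratic bound $O(TM_0^2)$ is established for the wrong quantity.
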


Since operator $(1-\Pzonal)$ effectively extracts the non-zonal component of a velocity field, estimate \eqref{zonal:est} confirms that $\intT\aver{\vu}$ is $O(\ep)$ close to zonal flows.

Combining this theorem with the energy estimate \eqref{grad:u:est}, we can obtain via interpolation that, for $\alpha \in[-4,1)$,
\[\begin{aligned}&\Big\|(1-\Pzonal)\intT\aver{\vu} \,dt\Big\|_{H^{\alpha}(\mS)}\\&\quad\le \big[\MT+ M_\text{ext}\big]\ep^a\mu^{-b},\end{aligned}\]
where $a,b$ are positive numbers depending on $\alpha$.

 We remark that the possible negative $\alpha$ values used in the $H^\alpha$ estimates above suggest that zonal flow patterns are associated with smaller wave numbers i.e. larger spatial scales, since the high wave number modes are damped in the definition of $H^\alpha$ norms for negative $\alpha$.
 
The above results in 3D are nontrivial extension from the 2D case studied in \cite{chengmahalov2013} which is centered around the Euler equations on a fast rotating unit sphere $\mS$, \be\label{Euler:sphere}\pt\vu+\nabla_\vu\vu+\grad q={z\over\ep}\vu^\perp,\qquad \dv\vu=0 ,\ee
where ${}^\perp$ denotes the $\pi/2$ counterclockwise rotation of the associated vector on $\mS$.

For comparison, the main theorem for the 2D system  \eqref{Euler:sphere} is stated as  following with some minor notational changes.
\begin{theorem}[\cite{chengmahalov2013}]\label{thm:sphere}Consider the incompressible Euler equation \eqref{Euler:sphere}   on $\mS$ with initial data  $\vu_0\in H^k(\mS)$ for $k\ge3$.  Let $M_0:=\|\vu_0\|_{H^k}$. Then, there exists     a function $f(\cdot):[-1,1]\mapsto\mR$ depending on $\vu$, so that 
\be\label{estimate:thm}\begin{aligned}\Big\|\intT\big(\vu-\rgrad f(z)&\big)\,dt\Big\|_{H^{k-3}(\mS)}\\& \le C  \MTold  \ep\,,\end{aligned}\ee
for any given $T\in[0,T^*/M_0]$ where
 constant  $  T^*$ depends on $k$ but is independent of $\ep$ and $\vu_0$.
 
In spherical coordinates, 
\[\rgrad f(z)=-f'(\sin\theta)\sin\theta\,\vep,\]
which represents   {longitude-independent zonal flows}.
 \end{theorem}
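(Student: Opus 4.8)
The plan is to recast the equation in vorticity/stream-function form, identify the skew-self-adjoint ``wave operator'' whose kernel is exactly the zonal flows, and then apply the time-averaging framework of \cite{chengmahalov2012}. Since $\vu$ is divergence-free on $\mS$, write $\vu=\rgrad\psi$ with stream function $\psi=\Delta^{-1}\omega$, where $\omega=\curl\vu=\Delta\psi$ is the scalar vorticity and $\Delta$ is the Laplace--Beltrami operator. Taking the curl of \eqref{Euler:sphere} and using $\vu^\perp=-\grad\psi$, one computes $\curl(z\vu^\perp)=\pap\psi$: the Jacobian $\{z,\psi\}$ collapses to $\pap\psi$ because $z=\cos\lat$ depends only on colatitude. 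Hence the system becomes
\[\pt\omega+\tfrac1\ep\sL\omega=-\vu\cn\omega,\qquad \sL:=-\pap\Delta^{-1}.\]
In the spherical-harmonic basis $\sL Y_l^m=\tfrac{im}{l(l+1)}Y_l^m$, so $\sL$ is skew-self-adjoint and $\ker\sL$ consists exactly of the $\lgt$-independent (zonal) modes $m=0$; under $\rgrad\Delta^{-1}$ these correspond precisely to velocities $\rgrad f(z)$. Let $\cP$ denote the projection onto this kernel (the $\lgt$-mean) and $\cQ=1-\cP$.

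The first genuine step is a uniform-in-$\ep$ a priori estimate ensuring $\|\vu(t)\|_{H^k}\le CM_0$ on $[0,T^*/M_0]$, using the standard well-posedness of $2$D Euler on $\mS$ to supply the smooth solution. The key structural fact is that $\pap$ commutes with $\Delta$, since the Laplace--Beltrami operator is invariant under rotation about the $z$-axis; therefore $\sL$ commutes with every power $\Lambda^s:=(-\Delta)^{s/2}$ and stays skew-self-adjoint in each $H^s$ inner product. Consequently the large term drops out of every energy identity: differentiating $\tfrac12\|\Lambda^{k-1}\omega\|_{\LL}^2$ in time, the rotation contributes $\langle\Lambda^{k-1}\sL\omega,\Lambda^{k-1}\omega\rangle=\langle\sL\Lambda^{k-1}\omega,\Lambda^{k-1}\omega\rangle=0$, leaving only the advection term, which Kato--Ponce commutator estimates bound by $\lesssim\|\vu\|_{H^k}^3$. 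This yields $\tfrac{d}{dt}\|\vu\|_{H^k}\lesssim\|\vu\|_{H^k}^2$ and hence the stated bound on $[0,T^*/M_0]$ with $T^*$ depending only on $k$.

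With these bounds I would integrate the vorticity equation over $[0,T]$ and solve for the oscillatory part. Integration gives $\tfrac1\ep\sL\intT\omega\,dt=-\intT\vu\cn\omega\,dt-(\omega(T)-\omega(0))=:R$, hence $\sL\cQ\intT\omega\,dt=\ep\,\cQ R$. The a priori estimates bound $R$ in vorticity $H^{k-2}$ by $\|R\|_{H^{k-2}}\lesssim\|\omega\|_{H^{k-2}}+T\sup_t\|\vu\cn\omega\|_{H^{k-2}}\lesssim M_0+TM_0^2=\MTold$. Since the symbol of $\sL^{-1}$ on the range of $\cQ$ is $\tfrac{-il(l+1)}{m}$, of size $\lesssim l(l+1)$, inversion loses exactly two derivatives, giving $\|\cQ\intT\omega\,dt\|_{H^{k-4}}\lesssim\ep\,\MTold$. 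Translating back to velocity via $\vu=\rgrad\Delta^{-1}\omega$, which gains one derivative and commutes with the zonal/non-zonal splitting (both preserve $\lgt$-Fourier modes), converts $H^{k-4}$ of vorticity into $H^{k-3}$ of velocity. Defining $f$ by $\rgrad f(z):=\tfrac1T\cP\intT\vu\,dt$, the zonal part of the time-average (necessarily of this form), the left-hand side of \eqref{estimate:thm} is exactly $\cQ\intT\vu\,dt$, yielding the claimed $O(\ep)$ bound.

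The main obstacle is the uniform-in-$\ep$ a priori estimate: a naive energy method applied directly to \eqref{Euler:sphere} produces commutators between the $\ep^{-1}$ Coriolis term and high-order derivatives, carrying a dangerous factor $\ep^{-1}$. The resolution hinges on passing to vorticity and exploiting that $\sL=-\pap\Delta^{-1}$ is a function of the commuting pair $(\pap,\Delta)$, so it is simultaneously diagonalized with all the derivative operators used in the estimates and contributes nothing to norm growth. A secondary technical point is the borderline derivative counting at $k=3$, where the product estimate for $\vu\cn\omega$ in $H^{k-2}=H^1$ sits at the edge of the Sobolev algebra property; this is handled by Gagliardo--Nirenberg/$L^p$ interpolation on $\mS$ rather than the algebra directly, and does not affect the final exponent.
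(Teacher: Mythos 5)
Your proposal is correct and follows essentially the same route as the proof the paper refers to: uniform-in-$\ep$ $H^k$ bounds obtained because the fast rotation term is skew-self-adjoint and commutes with $\Delta$ (hence with all $H^s$ inner products), then time-integration of the equation, and finally inversion of the skew operator on the non-zonal complement with a two-derivative loss, which is exactly the content of Lemma \ref{lm:sphere}. The only cosmetic difference is that you perform the inversion at the vorticity level, with $\sL=-\pap\Delta^{-1}$, while the paper states it at the velocity level via $\cLh$; the two operators are conjugate under $\curlh$, and the derivative count ($2$ from inversion plus $1$ from the nonlinearity/vorticity bookkeeping) comes out identically to give $H^{k-3}$.
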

 
 We finally remark that analysis of 3D Navier-Stokes equations and its variations in the geophysical context, including the existence of  solutions and the low Rossby number limit,  has seen substantial progress in recent years, e.g. \cite{lionstemamwang1992,lionstemamwang1995,lionstemamwang1997,BMN1997AA,BMN1999IUMJ,BMN2001IUMJ,caotiti2012,flandolimahalov2012}, just to name a few. There are also results regarding Navier-Stokes equations on thin 3D domains, e.g. \cite{temamziane1997,iftimiegaugelsell2007}, without the Coriolis effect.  The boundary conditions in the literature are either periodic, whole space,   non-slip   $\vu\bc=0$ or some variations of the Navier type \eqref{BC:Navier}. In the next two subsections, we  further discuss \eqref{BC:Navier} and its   variations as seen in literature.

 \subsection{Geometry of the Navier boundary conditions}
  
 The following proposition is regarding a general domain $\Omega$. 
 
 \begin{proposition}\label{thm:Navier}For general smooth domain $\Omega\subset\mR^3$, let $\nm$ be the outward normal at a point of $\pa\Omega $. Suppose $$\vu\ccdot\nm\bc=0.$$ Then, the $\SS  \nm$ term in the Navier boundary conditions \eqref{BC:Navier} satisfies 
\be\label{Navier:shapeO}\begin{split}  [\SS   \nm]_\ttan\Bc &=\big[{\nm\cn\vu}\big ]_\ttan- \vu\cn\nm  \\
&=  (\curl\vu)\times\nm-2\,\vu\cn\nm  ,\end{split}\ee
where the $\vu\cn\nm$ is well defined (intrinsically) on $\pa\Omega$ due to $\vu\ccdot\nm\bc=0$.
We can further rewrite it using 
\be\label{shapeO:kappa}- \vu\cn\nm  =\sum_{i=1 }^2(\vu\ccdot\vce_i)\kappa_i\vce_i,\ee
 with $\vce_1,\vce_2$ being a pair of orthonormal, principal directions of the surface $\pa\Omega$ and $\kappa_i =-(\vce_i\cn\nm)\cdot\vce_i$   being the corresponding principal curvature. 

As a consequence, for the case of spherical shell domain defined in \eqref{def:OmegaSH}, 
\begin{subequations}\label{Navier:kappa:SH}
\begin{align}\label{Navier:kappa:SH:a}   [\SS \nm  ]_\ttan\Big|_{r=1\pm\thick} &=\pm\Big({\pa\vuh\over\pa r} - { \vuh\over  r}\Big)\\
\label{Navier:kappa:SH:b}&=  (\curl\vu)\times\nm  \mp { 2\vuh\over  r} .\end{align}
\end{subequations}
\end{proposition}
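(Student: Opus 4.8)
The plan is to establish the two coordinate-free identities of \eqref{Navier:shapeO} first and then specialize to the sphere. Writing $\SS\nm=(\grad\vu)\nm+(\grad\vu)\trsp\nm$, the convention \eqref{nabla:vu:id} immediately rewrites the first summand as $(\grad\vu)\nm=\nm\cn\vu$. The whole difficulty is therefore concentrated in the tangential part of the second summand $(\grad\vu)\trsp\nm$, and this is where I expect the only genuine obstacle to lie; everything else is bookkeeping with the matrix identities \eqref{nabla:vu:ids} and standard differentiation.

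To treat $(\grad\vu)\trsp\nm$ I would test it against an arbitrary tangent vector $\tg$ at the boundary point. By \eqref{nabla:trsp:vu:id} one has $\big((\grad\vu)\trsp\nm\big)\ccdot\tg=(\tg\cn\vu)\ccdot\nm$. Differentiating the zero-flux hypothesis $\vu\ccdot\nm=0$ along the tangential direction $\tg$ (legitimate since it holds identically on $\pa\Omega$) gives $(\tg\cn\vu)\ccdot\nm=-\vu\ccdot(\tg\cn\nm)$. The crux is then the symmetry of the Weingarten (shape) operator $\tg\mapsto\tg\cn\nm$ on the tangent space: I would prove $\vu\ccdot(\tg\cn\nm)=(\vu\cn\nm)\ccdot\tg$ for tangent fields $\vu,\tg$ by the standard second-fundamental-form argument, namely that differentiating $\nm\ccdot\vu=0$ and $\nm\ccdot\tg=0$ shows the difference of the two sides equals $-\nm\ccdot[\tg,\vu]$, which vanishes because the Lie bracket of tangent fields remains tangent to $\pa\Omega$. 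Note that the zero-flux condition \eqref{BC:zero:f} is exactly what makes $\vu$ tangential and hence eligible for this argument, and it is also what makes $\vu\cn\nm$ intrinsic, since only tangential derivatives of $\nm$ then enter. Combining, $\big((\grad\vu)\trsp\nm\big)\ccdot\tg=-(\vu\cn\nm)\ccdot\tg$ for every tangent $\tg$; after checking $\vu\cn\nm\perp\nm$ (which follows from $|\nm|\equiv1$, so $\vu\cn\nm$ is already tangential), this yields $\big[(\grad\vu)\trsp\nm\big]_\ttan=-\vu\cn\nm$ and thus the first equality of \eqref{Navier:shapeO}.

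For the second equality I would invoke the purely algebraic identity $(\curl\vu)\times\nm=\nm\cn\vu-(\grad\vu)\trsp\nm$, verified by a short Levi-Civita ($\epsilon$--$\delta$) computation. Since its left-hand side is automatically orthogonal to $\nm$, taking tangential parts and substituting $\big[(\grad\vu)\trsp\nm\big]_\ttan=-\vu\cn\nm$ gives $(\curl\vu)\times\nm=[\nm\cn\vu]_\ttan+\vu\cn\nm$, which rearranges into the claimed form carrying the $-2\,\vu\cn\nm$ term. The eigenvalue expression \eqref{shapeO:kappa} then follows by expanding the tangential field as $\vu=\sum_i(\vu\ccdot\vce_i)\vce_i$ in an orthonormal principal frame and using $\vce_i\cn\nm=-\kappa_i\vce_i$, which is precisely the defining relation of the principal directions $\vce_i$ and curvatures $\kappa_i=-(\vce_i\cn\nm)\ccdot\vce_i$.

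Finally, for the spherical shell I would compute \eqref{Navier:kappa:SH} directly rather than through curvatures. At $r=1\pm\thick$ the outward normal is $\nm=\pm\ver$, and since $\vu$ is tangential there one has $\vu=\vuh$. Using $\ver\cdot\grad=\prs$ together with $\prs\vet=\prs\vep=0$ gives $[\nm\cn\vu]_\ttan=\pm\,\prs\vuh$, while the spherical-frame relations $\pat\ver=\vet$ and $\pap\ver=\sin\lat\,\vep$, combined with the metric factors in $\vuh\cn{}$, give $\vuh\cn\ver=\vuh/r$ and hence $\vu\cn\nm=\pm\,\vuh/r$. Substituting these two computations into the first and second equalities of \eqref{Navier:shapeO} produces \eqref{Navier:kappa:SH:a} and \eqref{Navier:kappa:SH:b} respectively.
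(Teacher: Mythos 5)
Your proof is correct, and its skeleton coincides with the paper's: decompose $\SS\nm=(\grad\vu)\nm+(\grad\vu)\trsp\nm$, test against a tangent vector $\tg$ via \eqref{nabla:vu:ids}, differentiate the zero-flux condition tangentially to get $(\tg\cn\vu)\ccdot\nm=-(\tg\cn\nm)\ccdot\vu$, pass from $(\tg\cn\nm)\ccdot\vu$ to $(\vu\cn\nm)\ccdot\tg$ by symmetry of the shape operator, and obtain the vorticity form from the algebraic identity $(\curl\vu)\times\nm=\big(\nabla\vu-(\nabla\vu)\trsp\big)\nm$. You diverge from the paper in two localized places, and in both you are arguably more complete. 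First, the symmetry of the Weingarten map is exactly the crux, and the paper only asserts it, sketching a level-set justification ($\nm=\grad g/|\grad g|$, so the symmetry descends from that of the Hessian of $g$) and explicitly skipping the computation; you instead prove it by the Lie-bracket argument, showing the difference of the two sides equals $-\nm\ccdot[\tg,\vu]$, which vanishes since the bracket of fields tangent to $\pa\Omega$ stays tangent. This is a genuine, self-contained replacement for the step the paper omits, and it correctly isolates the role of \eqref{BC:zero:f} in making $\vu$ eligible for the argument and $\vu\cn\nm$ intrinsic. Second, for the shell you bypass \eqref{shapeO:kappa} entirely and verify \eqref{Navier:kappa:SH:a}--\eqref{Navier:kappa:SH:b} by direct spherical-frame computation ($\prs\vet=\prs\vep=0$, $\pat\ver=\vet$, $\pap\ver=\sin\lat\,\vep$, giving $\vuh\cn\ver=\vuh/r$), whereas the paper reads off $-\vu\cn\nm=\mp\vuh/r$ from the principal curvatures $\kappa_i=\mp1/r$ via \eqref{shapeO:kappa}. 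The coordinate route is more concrete and independently checks the curvature bookkeeping; the paper's route makes the general statement \eqref{shapeO:kappa} do the work and generalizes immediately to any domain whose principal curvatures are known. Either way, all steps you use are valid, including the observation that $\vu\cn\nm$ and $(\curl\vu)\times\nm$ are automatically tangential, which is needed to drop the $[\,\cdot\,]_\ttan$ on those terms.
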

The proof is postponed to the Appendices. Also,
consult \cite{watanabe2003} for more details.

In literature, the Navier boundary conditions are also referred to as ``stree-free'' or ``slip'' boundary condition. It should be however distinguished from the so-called ``free'' boundary condition (which is confusingly referred to as ``slip'' boundary condition in some cases),\[\vu\ccdot\nm\bc=0,\qquad(\curl\vu)\times\nm\bc=0.\]The Navier boundary conditions should also be distinguished from the Neumann type boundary condition,\[\vu\ccdot\nm\bc=0,\qquad {\pa\vuh\over\pa r} \bc=0\] as used in e.g. \cite{lionstemamwang1995}.  
By \eqref{Navier:kappa:SH} of Proposition \ref{thm:Navier}, they only correspond to special cases of  the Navier boundary conditions \eqref{BC:Navier} when one lets $\vg \equiv{\bf0}$ and makes specific choices for the parameter $\lambda$. We are not aware of any physical explanation for these choices of $\lambda$. A more serious issue is that   such specific choices always involve $\lambda<0$   on the inner boundary $r=1-\thick$. However,   by   Proposition \ref{prop:energy:decrease} and its proof, the constraint $\lambda\ge0$ on the boundary {\it everywhere} is   necessary for  the dissipation of energy.  Also  it is physically invalid to argue that the above two  boundary conditions are  the small-curvature approximations of  the Navier boundary conditions \eqref{BC:Navier}, because the  principal curvatures of $\pa\Omega$ are of $O(1)$  in  global circulation models for which the radius of Earth is rescaled to near the unit.

Interested reader can further
consult \cite{iftimiegaugelsell2007}, in particular the top part of page 1085, and references   therein.

  \subsection{Physical considerations of external forcing and non-homogeneous boundary conditions}\label{subs:physical}
 
In the main Theorem \ref{thm:shell}, the external force $\vFe$ affects the estimate only via $(1-\Pzonal)\intT \aver{ \vFe}$ which is its non-zonal component averaged in time and $r$. 
This external force is intimately connected to
non-homogeneous boundary conditions which are studied in e.g. the context of
planetary boundary layer (PBL). Mathematically speaking,  if  $\wtd\vu $ satisfies the Navier-Stokes equations \eqref{NS:vec} with  nonhomogeneous boundary conditions
\[ \left\{\begin{aligned}   \wtd \vu \ccdot\nm\bc &=0, \\ \Big[\SS_{\wtd\vu}  \nm +\lambda\wtd\vu  \Big]_\ttan\bigg|_{\partial\Omega}&=\vg \ne{\bf0}, \end{aligned}\right.\]
and if one can  find {\it some} velocity field $\vv$, regardless of the dynamics, that is only subject to the   boundary conditions 
\be\label{BC:vv} \left\{\begin{aligned}   \vv\ccdot\nm\bc &=0, \\ \Big[\SS_\vv  \nm+\lambda\vv \Big]_\ttan\bigg|_{\partial\Omega}&=\vg , \end{aligned}\right.\ee
Then, the new unknown $\vu:=\wtd\vu -\vv$ will satisfy   \eqref{NS:vec}  with homogeneous boundary conditions 
\[ \left\{\begin{aligned}   \vu\ccdot\nm\bc &=0, \\ \Big[\SS_\vu  \nm+\lambda\vu\Big]_\ttan\bigg|_{\partial\Omega}&={\bf0},\end{aligned}\right.\]which is then covered by the main Theorem \ref{thm:shell}. The new external force term in the $\vu$ system apparently contains information of the original boundary data $\vg$. 

There are indeed infinitely many ways to construct $\vv$ satisfying \eqref{BC:vv}. For example, it suffices to find vector fields $\va(\lat,\lgt),\vb(\lat,\lgt)$ that are both tangent to $\mS$ so that $$\vv= r^2\va+\vb$$ satisfies \eqref{BC:vv}.  The $\vv\ccdot\nm\bc =0$ part is apparently valid. For the second condition of \eqref{BC:vv}, we rewrite it using   \eqref{Navier:kappa:SH:a},
 \[ \Big(\pm\Big({\pa\vv_h\over\pa r} - { \vv_h\over  r}\Big)+\lambda\vv_h\Big)\Big|_{r=1\pm\thick}=\vgoi.\]
 Substitute $\vv= r^2\va+\vb$ and rearrange
  \[\left\{\begin{aligned} \big(1+\thick+(1+\thick)^2\lambda\big)\va+\big({-1\over1+\thick}+\lambda\big)\vb&=\vgo,\\ \big(-1+\thick+ (1-\thick)^2\lambda\big )\va+\big({ 1\over1-\thick} +\lambda\big )\vb&=\vgi.\end{aligned}\right.\]
 With $\lambda\ge0$ and $\thick\in(0,1/2)$, we always have  $$ \big(1+\thick+(1+\thick)^2\lambda\big)> \big|-1+\thick+ (1-\thick)^2\lambda\big|$$ and $$\big({ 1\over1-\thick} +\lambda\big )>\big|{-1\over1+\thick}+\lambda\big|.$$ Therefore, the coefficient matrix of the above linear system is diagonally dominant. Thus, we can  perform Gaussian elimination (while pretending $\va,\vb$ to be scalar unknowns) and express $\va,\vb$ as linear combinations of $\vgo,\vgi$ which are both tangent to $\mS$.

 \bigskip \section{\bf Magnetohydrodynamics    in $\mR^3$: formulation and main results}\label{sec:result:2}
Consider the domain to be $\mR^3$ in which a uniform, imposed magnetic field $\vez$ resides and a fast rotating (about $\vez$), conducting fluid moves subject to the predominantly large Coriolis force and Lorentz force. The fluid is homogeneous, incompressible and un-magnetizable. Then, upon some scaling arguments, one can reduce the full Navier-Stokes and Maxwell's equations to the following MHD  system  \cite[\S 3.8]{davidson2001}   for the unknowns:  velocity field $\vu$  and        induced magnetic field  $\vb$ (so that   the total magnetic field is given by $\vez+\ep\vb$),
\begin{subequations}\label{vu:vb:MHD}
\be\label{vu:MHD}\left\{\begin{aligned}\pt\vu+\vu\cn \vu&+\grad q\\= \,&{  \vu\times\vez\over\ep} +{ ( \curl\vb)\times(\vez+\ep\vb)\over\ep} ,\\\dv\vu=\,&0;\end{aligned}\right.\ee
\be\label{vb:MHD} \pt\vb= \dfrac{\curl\big[\vu\times(\vez+\ep\vb)\big]}{\ep} ,\quad\dv\vb=0.\ee\end{subequations}

Here, $\ep$ denotes the MHD Rossby number as well as the ratio of the induced magnetic field over imposed magnetic field; $q$ denotes the pressure. Note  in  \eqref{vu:MHD} the Coriolis force and Lorentz force are of the same scale which is $O(1/\ep)$ times the inertia. This is a reasonable scaling since the ratio of these two forces  is often  close to 1 in many geophysical and astrophysical applications  (\cite{davidson2001}). For simplicity, we have set both the kinetic viscosity and  magnetic viscosity to be zero.

Let $M_0:=\|(\vu_0,\vb_0)\|_{H^k(\mR^3)}$ for $k>5/2$. By the standard energy method, we know 
\be\label{energy:MHD}\ba\|(\vu,\vb)\|_{H^k(\mR^3)}&\leC_k M_0 \\\text{ for positive times }\;\; &t\leC_k/ M_0.\ea\ee

\begin{theorem}\label{thm:MHD}
Consider any classical solution to \eqref{vu:vb:MHD} satisfying \eqref{energy:MHD}. Then, for any positive $T\leC_k/ M_0 $,
\be\label{est:vu}\Big\|\intT\vu\,dt\Big\|_{W^{k-3,\infty} (\mR^3)}\leC_k M_0\left[\big(T+M_0T^2 \big)\ep\right]^{1\over2}  ,\ee
and
\be\label{est:vb}\Big\|\intT\vb\,dt\Big\|_{W^{k-4,s} (\mR^3)}\leC_{k,s} M_0\left[\big(T+M_0T^2 \big)\ep\right]^{{1\over6}-{1\over s}} ,\ee
with  $6<s<\infty$.
\end{theorem}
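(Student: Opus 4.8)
The plan is to integrate each evolution equation once in time, read off that the averages $\intT\vu\,dt$ and $\intT\vb\,dt$ have small vertical derivatives, and then recover the averages themselves by a frequency split that isolates the degenerate plane $\xi_3=0$ (the slow, magnetostrophic modes). Write $\cP$ for the Leray projection, and use \eqref{energy:MHD} together with the algebra property of $H^{k-1}(\mR^3)$ (valid since $k>5/2$) to bound the quadratic terms by $C_kM_0^2$. Two structural identities drive everything: $\curl[\vu\times\vez]=\pa_z\vu$ and $\cP[(\curl\vb)\times\vez]=\pa_z\vb$, which reduce the linear induction and Lorentz terms to plain vertical derivatives.

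Integrating \eqref{vb:MHD} in time gives
\[\pa_z\intT\vu\,dt=\ep\Big[\vb(0)-\vb(T)+\intT\curl(\vu\times\vb)\,dt\Big],\]
so that $\|\pa_z\intT\vu\|_{H^{k-1}}\le C_k\ep(M_0+M_0^2T)$; applying $\cP$ to \eqref{vu:MHD} and integrating yields the companion relation $\cP[(\intT\vu\,dt)\times\vez]+\pa_z\intT\vb\,dt=\ep\,\mathbf R$ with $\|\mathbf R\|_{H^{k-1}}\le C_k(M_0+M_0^2T)$. Thus $\pa_z\intT\vu$ is $O(\ep)$ in an $L^2$-based norm, whereas $\pa_z\intT\vb$ will only be seen to be small once the velocity estimate is in hand. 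To prove \eqref{est:vu} I pass to Fourier space and split into the slab $|\xi_3|<\eta$ and its complement, with threshold $\eta$ to be optimized. On $\{|\xi_3|\ge\eta\}$ I divide by $\xi_3$ and feed the $O(\ep)$ bound on $\pa_z\intT\vu$ into $\|g\|_{W^{k-3,\infty}}\lesssim\|\langle\xi\rangle^{k-3}\widehat g\|_{L^1}$; Cauchy--Schwarz against a convergent weight produces a term of size $\ep\,\eta^{-1/2}(M_0+M_0^2T)$. On the slab I use only $\|\intT\vu\|_{H^k}\le CM_0T$ and the $O(\eta)$ measure of the slab to get $\eta^{1/2}M_0T$. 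Balancing at $\eta\simeq\ep(M_0+M_0^2T)/(M_0T)$ reproduces exactly $C_kM_0[(T+M_0T^2)\ep]^{1/2}$.

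The magnetic estimate is the delicate step, and its weaker exponent reflects a genuine loss of information. From the companion relation and the velocity bound just proved, $\|\pa_z\intT\vb\|_{W^{k-4,\infty}}\lesssim\|\intT\vu\|_{W^{k-3,\infty}}+\ep\|\mathbf R\|_{H^{k-1}}\lesssim M_0[(T+M_0T^2)\ep]^{1/2}$. Crucially this smallness is now only $L^\infty$-based, so in the high-frequency region I can no longer invoke the sharp Cauchy--Schwarz of the velocity step (which needed $L^2$ smallness of the vertical derivative). Instead a Littlewood--Paley decomposition in $z$ gives the cruder bound $\|\intT\vb\|_{W^{k-4,\infty},\,|\xi_3|\ge\eta}\lesssim\eta^{-1}\|\pa_z\intT\vb\|_{W^{k-4,\infty}}$, which against the slab contribution $\eta^{1/2}M_0T$ optimizes to $\|\intT\vb\|_{W^{k-4,\infty}}\lesssim M_0[(T+M_0T^2)\ep]^{1/6}$. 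Finally I interpolate $\|\cdot\|_{L^s}\le\|\cdot\|_{L^6}^{6/s}\|\cdot\|_{L^\infty}^{1-6/s}$ at the level of $\langle D\rangle^{k-4}\intT\vb$, using the Sobolev embedding $H^1(\mR^3)\hookrightarrow L^6$ to keep the $L^6$ factor bounded by $CM_0T$; the $L^\infty$ factor then contributes the power $\tfrac16\bigl(1-\tfrac6s\bigr)=\tfrac16-\tfrac1s$, giving \eqref{est:vb} and forcing the range $6<s<\infty$.

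I expect the main obstacle to be precisely this last point: the slow modes concentrate on the measure-zero plane $\{\xi_3=0\}$, where both vertical-derivative relations degenerate, so every estimate rests on trading the energy bound against the dispersive $\ep$-bound across the thin slab. For $\vu$ the derivative control is $L^2$-based and the trade is sharp, giving $\ep^{1/2}$; for $\vb$ the control is inherited only in $L^\infty$, which blocks the sharp Cauchy--Schwarz and forces the lossy Littlewood--Paley division, and this is exactly what degrades the exponent to $\tfrac16-\tfrac1s$. Making the anisotropic (Bernstein-type) inequalities and the final $L^6$-critical interpolation rigorous, while tracking the derivative counts so that the $H^k$ regularity is never exceeded, is where the real work lies.
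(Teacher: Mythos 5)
Your treatment of the velocity estimate \eqref{est:vu} is correct, and it is in substance the paper's own proof: the Fourier splitting in $\xi_3$ at an optimized threshold $\eta$, trading the $L^2$ bound $\|\intT\vu\|_{H^k}\lesssim M_0T$ against the $L^2$ bound on $\pa_z\intT\vu$ read off from the induction equation, is exactly how the paper proves its one-dimensional inequality \eqref{Nash} and the anisotropic Lemma \ref{est:R2}, and your balancing reproduces \eqref{est:vu} precisely.

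The magnetic estimate is where you diverge from the paper, and there is a genuine gap. You isolate $\pa_z\intT\vb$ from the relation $\cP\big[(\intT\vu)\times\vez\big]+\pa_z\intT\vb=\ep\,\mathbf{R}$ and then assert $\big\|\cP\big[(\intT\vu)\times\vez\big]\big\|_{W^{k-4,\infty}}\lesssim\|\intT\vu\|_{W^{k-3,\infty}}$. This is not justified: $\cP\big[(\,\cdot\,)\times\vez\big]=-\curl\Delta^{-1}\pa_z$ is a degree-zero Fourier multiplier singular at $\xi=0$ (a Riesz-transform-type operator), and such operators are \emph{not} bounded on $L^\infty$-based spaces; crucially the failure comes from low frequencies, where the one derivative you are prepared to lose buys nothing. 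Nor can you fall back on $L^2$ there, since $\|\intT\vu\|_{H^{k-1}}$ is only $O(M_0T)$, and the $L^2$-smallness of $\pa_z\intT\vu$ does not pass through the factor $|\xi_3|/|\xi|$ near the origin. This is exactly the trap the paper's proof is engineered to avoid: instead of isolating $\pa_z\intT\vb$, the paper takes the curl of the first component of $\intT\sL$ and subtracts the second, so the nonlocal term $-\curl\Delta^{-1}\pa_z\intT\vu$ becomes the local one $\pa_z\intT\vu$ and the smallness of $\pa_z\curl\intT\vb$ is obtained in the $L^2$-based space $H^{k-2}$; the anisotropic lemma then gives $\curl\intT\vb$ the full $\ep^{1/2}$ rate in $W^{k-4,\infty}$ (see \eqref{est:curl:vb}), and the exponent $\frac16-\frac1s$ together with the restriction $s<\infty$ arises only afterwards, from inverting the curl via boundedness of $\curl(-\Delta)^{-1/2}$ on $L^p$ ($1<p<\infty$), Hardy--Littlewood--Sobolev, and the $L^2$--$L^\infty$ H\"older interpolation.

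Your step is repairable, so the route is salvageable rather than wrong: split $\cP\big[(\intT\vu)\times\vez\big]$ dyadically; on blocks $|\xi|\sim2^j$ with $j\le0$ use Bernstein ($L^2\to L^\infty$, factor $2^{3j/2}$) together with the symbol bound $|\xi_3|/|\xi|\le 2^{-j}|\xi_3|$, so these pieces are controlled by $2^{j/2}\|\pa_z\intT\vu\|_{L^2}$ and sum to an $O(\ep)$ quantity; on blocks $j\ge0$ the spare derivative makes the multiplier bounds summable. Done this way your scheme closes and in fact yields the endpoint $s=\infty$ with rate $\ep^{1/6}$, which the paper's HLS-based inversion cannot reach. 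Two smaller points: your optimization actually produces $M_0T^{2/3}\big[(T+M_0T^2)\ep\big]^{1/6}$ rather than $M_0\big[(T+M_0T^2)\ep\big]^{1/6}$, and the final interpolation an additional $T^{6/s}$, so the claimed bounds are not reproduced ``exactly'' (the paper's own bookkeeping of $T$ factors, e.g.\ in \eqref{est:sL}, is comparably loose); and the identity $\cP\big[(\curl\vb)\times\vez\big]=\pa_z\vb$ you invoke deserves a line of proof, namely $\curl(\vb\times\vez)=\pa_z\vb$ for divergence-free fields combined with $\vb=\cP\vb=-\curl\Delta^{-1}\curl\vb$.
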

Therefore, time-averages of the solution vanish at order of fractional powers of $\ep$ when measured in $L^s$ norms ($s>6$). This result suggests there is dispersion in the time-averages, although we do not impose any spatial decay on the initial data like in the classical dispersive wave theory.

 \bigskip \section{\bf Barotropic averaging of the Navier-Stokes equations}  \label{sec:aver}

Recall  the definition of barotropic averaging \eqref{def:aver},
\be\label{def:aver:1} \aver{\vu} :={1\over2\thick}\int_\Ri^\Ro r\vuh  \,dr, \ee
where $$\vuh:=\vu-(\vu\ccdot\ver)\,\ver\,.$$
Also define the barotropic  average  for a scalar $f$,
\be\label{def:aver:f}\aver{f}:={1\over2\thick}\int_\Ri^\Ro f\,dr.\ee
 
We first remove the pressure term $\nabla q$ in  \eqref{NS:vec} using the Helmholtz-Leray decomposition. Define $\sX$ to be the space of incompressible velocity fields subject to zero-flux boundary condition, \[\ba\sX&:= \LL\text{ closure of }\\&\quad\left\{\vu^\text{inc}\in \cC^1(\overline{\Omega})\,\Big|\,\dv\vu^\text{inc}=0,\;\vu^\text{inc}\ccdot\nm\bc=0\right\}\ea\]
By using   testing   functions, we see that 
\be\label{sX:testing}\begin{aligned}\sX&= \LL\text{ closure of }\\&\quad\left\{\vu\in \cC^1(\overline{\Omega})\,\Big|\,\intO \vu\cn f=0\text{ for any }f\in \H^1(\Omega)\right\}\\
&=\left\{\vu\in \LL( {\Omega})\,\Big|\,\intO \vu\cn f=0\text{ for any }f\in \H^1(\Omega)\right\}\end{aligned}\ee
 
Define $\cP$ as the $\LL$-orthogonal projection onto  $\sX $ so that, for any $\vu,\vu'\in \LL(\Omega)$, \begin{subequations}\label{def:cP} \begin{align}\label{def:cP:1} \cP^2\vu=\cP\vu \in \sX,&\\
\label{def:cP:2}\intO(\vu-\cP\vu)\ccdot(\cP\vu')&=0.\end{align}\end{subequations}
In fact, $\cP$ is the classical Leray projection subject to zero-flux boundary condition. Then, define 
\[\cQ:=I-\cP.\]

  Now pick any scalar $f\in \H^1(\Omega)$. By orthogonality of $\cP$, $\cQ$ in \eqref{def:cP:2}, we have
  \[\intO\cP(\nabla f)\ccdot \cP(\nabla f)=\intO\cP(\nabla f)\cn f\]
 which is zero due to $\cP(\nabla f)\in\sX$ satisfying \eqref{sX:testing}. In other words,
 \[\cP(\nabla f)\equiv0.\]
By this property, we apply $\cP$ on the first equation of \eqref{NS:vec}, cancel the $\grad q$ term and  reformulate it into,
\be\label{Navier-Stokes:cL}\pt\vu+\cP(\nabla_\vu\vu)={1\over\ep}\cP( \vu\times\ver \cos\lat)+\mu\cP\Delta\vu.\ee

Note that, for generic div-free velocity field $\vu$ satisfying the  Navier boundary conditions   \eqref{BC:Navier}, the   term $\Delta\vu$ is no longer subject to the zero-flux boundary condition $\vu\ccdot\nm\bc=0$  whereas the image of $\cP$ always satisfies the zero-flux boundary condition. Thus, $\cP\Delta\vu$ and $\Delta\vu$ differ by a div-free, potential flow --- the gradient of the so-called Stokes pressure.

\subsection{\bf Barotropic averaging of Helmholtz-Leray projection}
Similar to \eqref{sX:testing}, we define
\[\sXh=\left\{\vuh\in \LL( {\mS})\,\Big|\,\intO \vuh\cn g=0\text{ for any }g\in \H^1(\mS)\right\}\]
and then define projections $\cPh$ and $\cQh:=I-\cPh$ for ``horizontal'' velocity field $\vuh\in L^2(\mS)$  so that, analogous to \eqref{def:cP} 
\begin{subequations}\label{def:cPh} \begin{align}\label{def:cPh:1}  \cPh^2\vuh =\cPh\vuh& \in \sXh,\\\label{def:cPh:2}
 \int_{\mS} \cQh\vuh \ccdot(\cPh\vuh')&=0.\end{align}\end{subequations}

Here and below,  subscript $h$ following an operator   indicates the operator acts on scalar or vector fields defined on $\mS$. For example, $\Deltah$ denotes the Laplace–Beltrami operator on $\mS$. Their properties are discussed in the appendices of \cite{chengmahalov2012}.

Now, we give the relation between $\cP$ and $\cPh$.
\begin{lemma}\label{lm:aver:Leray}
For any vector field $\vu=\cP\vu+\cQ\vu\in L^2(\OmegaSH)$,
\[\aver{\cP\vu}=\cPh\aver{\vu},\qquad\aver{ \cQ\vu}=\cQh\aver{\vu}.\]
\end{lemma}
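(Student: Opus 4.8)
My plan is to recognize the pair $(\aver{\cP\vu},\aver{\cQ\vu})$ as precisely the Helmholtz--Leray decomposition of $\aver{\vu}$ on the sphere $\mS$. Since averaging is linear and $\cP+\cQ=I$, $\cPh+\cQh=I$, the two claimed identities are equivalent, so by uniqueness of the $L^2(\mS)$-orthogonal decomposition it suffices to prove two facts: (a) $\aver{\cP\vu}\in\sXh$, and (b) $\aver{\cQ\vu}$ is $L^2(\mS)$-orthogonal to $\sXh$. Both averages lie in $L^2(\mS)$ by Cauchy--Schwarz in the radial variable, so this is well posed. The whole argument will hinge on a single change-of-variables identity in which the weight $r$ in \eqref{def:aver:1} is exactly tuned to cancel the $1/r$ of the spherical gradient against the $r^2$ of the volume Jacobian.

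To establish (a), I would take any test function $g\in\H^1(\mS)$ and extend it to the $r$-independent function $f(r,\lat,\lgt):=g(\lat,\lgt)$, which lies in $\H^1(\Omega)$ since $\Omega=(\Ri,\Ro)\times\mS$ is a product domain. Because $f$ has no radial dependence, its 3D gradient is purely horizontal, $\grad f=\tfrac1r\gradh g$, so only the horizontal part of $\cP\vu$ contributes and $\cP\vu\cn f=\tfrac1r(\cP\vu)_h\cdot\gradh g$. Integrating against the volume element $r^2\,dS\,dr$, the factor $\tfrac1r\cdot r^2=r$ reproduces exactly the weight of the barotropic average, giving
\[\intO(\cP\vu)\cn f=\intS\Big(\int_\Ri^\Ro r(\cP\vu)_h\,dr\Big)\cdot\gradh g\,dS=2\thick\intS\aver{\cP\vu}\cdot\gradh g\,dS.\]
Since $\cP\vu\in\sX$, the left side vanishes by \eqref{sX:testing}, hence $\intS\aver{\cP\vu}\cdot\gradh g\,dS=0$ for every $g\in\H^1(\mS)$, which is exactly the condition placing $\aver{\cP\vu}$ in $\sXh$.

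For (b), I would write $\cQ\vu=\grad\psi$, the range of $\cQ$ being the $L^2$-closure of gradients. Its horizontal part is $(\grad\psi)_h=\tfrac1r\gradh\psi$, so again the weight $r$ cancels and the radial integration decouples from $\gradh$, yielding
\[\aver{\cQ\vu}=\frac1{2\thick}\int_\Ri^\Ro\gradh\psi\,dr=\gradh\Big(\frac1{2\thick}\int_\Ri^\Ro\psi\,dr\Big)=\gradh\aver{\psi},\]
with $\aver{\psi}$ the scalar barotropic average \eqref{def:aver:f}. Thus $\aver{\cQ\vu}$ is a gradient on $\mS$ and is therefore $L^2(\mS)$-orthogonal to every element of $\sXh$ by the defining property of $\sXh$. (Equivalently, one may avoid the potential by lifting any $\vw\in\sXh$ to the 3D field $\tfrac1r\vw$, which is divergence-free and tangent to the spheres, hence lies in $\sX$, and then reading off $\intO\cQ\vu\cdot\tfrac1r\vw=2\thick\intS\aver{\cQ\vu}\cdot\vw=0$.) Combining (a) and (b) with uniqueness of the orthogonal decomposition of $\aver{\vu}$ identifies $\aver{\cP\vu}=\cPh\aver{\vu}$ and $\aver{\cQ\vu}=\cQh\aver{\vu}$.

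The step I expect to be the main obstacle is the rigorous handling of the function spaces in the two reductions: one must check that the extension $f=g$ genuinely belongs to $\H^1(\Omega)$, that the lifted field $\tfrac1r\vw$ (or the potential $\psi$) lies in the correct space so that the $L^2$-closures defining $\sX$, $\sXh$ and the range of $\cQ$ are respected, and in particular that testing only against $r$-independent $f$ and against lifted $\vw$ is enough to characterize membership in $\sXh$ and its orthogonal complement. The geometric identities $\grad f=\tfrac1r\gradh g$ and $(\grad\psi)_h=\tfrac1r\gradh\psi$ are routine once set up, and the remainder is just the Jacobian bookkeeping that makes the chosen weight $r$ the natural one.
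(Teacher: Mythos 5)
Your proof is correct, but it follows a genuinely different route from the paper's. The paper reduces the claim to $\aver{\cQ\vu}=\cQh\aver{\vu}$, passes by density to $\vu\in\H^1(\Omega)$ so that the normal trace is defined, represents $\cQ\vu=\grad f$ with $f$ solving the Neumann problem \eqref{def:cQ}, and then integrates that Poisson equation in $r$ against the weight $r^2$: the Neumann boundary condition cancels the boundary terms and yields the two-dimensional identity $\Deltah\aver{f}=\dvh\aver{\vu}$, which identifies $\gradh\aver{f}$ with $\cQh\aver{\vu}$ through the explicit formula \eqref{def:cQh}; the same weight cancellation you exploit then gives $\aver{\grad f}=\gradh\aver{f}$ and closes the argument. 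You instead bypass the elliptic problem entirely: you verify $\aver{\cP\vu}\in\sXh$ by testing against $r$-independent lifts of $\H^1(\mS)$ functions, verify $\aver{\cQ\vu}\perp\sXh$ by lifting fields $\vv\in\sXh$ to $\tfrac1r\vv\in\sX$ (or via a potential), and conclude by uniqueness of the $L^2(\mS)$-orthogonal decomposition. Your route is more elementary and works directly at the $L^2(\Omega)$ level --- no density step, no trace theory, no elliptic solvability --- provided you use your parenthetical lifting variant for part (b); your primary variant, $\cQ\vu=\grad\psi$, quietly relies on the closedness of the space of gradients in $L^2(\Omega)$ (true on this Lipschitz domain, but an extra ingredient beyond what \eqref{sX:testing} gives you), so the lifting version is the cleaner one. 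What the paper's computation buys in exchange is the explicit intermediate identity $\Deltah\aver{f}=\dvh\aver{\vu}$ --- the averaged Neumann potential itself solves the spherical Poisson equation --- which is more quantitative information than pure orthogonality. Both arguments ultimately rest on the same geometric fact: the weight $r$ in \eqref{def:aver} exactly intertwines $\grad$ on $\Omega$ with $\gradh$ on $\mS$, via \eqref{3D:2D:b}.
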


In the proof,  we will repeatedly use the following basic facts that relate  the differential operators in $\Omega$ to those in $\mS$. \\
For vector $\vu=\ur\ver+\vuh$,
\begin{subequations}\label{3D:2D}
\be\label{3D:2D:a}\ba \dv\vu&=r^{-2}{\pa\over\pa r}(r^2 \ur )+r^{-1}\dvh\vuh,\\
\curl \vu& =  r^{-1}(\curlh\vuh)\ver\\&\quad+r^{-1}\big(\gradh \ur-\pr(r\vuh)\big)\times\ver;\ea\ee 
 for scalar $f$, \be\label{3D:2D:b}\ba \grad f=\,& \pr f\ver+r^{-1}\gradh f,\\
\Delta f=\,& r^{-2}\pr(r^2\pr f)+r^{-2}\Deltah f.\ea\ee\end{subequations}
Note that the relation of $\curl$ and $\curlh$ in polar coordinates (with $\lat$ being the colatitude) is due to  the following formulations that   roughly resemble the Cartesian-coordinate form,
\[\begin{split}\text{for }\vu=\,&\ur\ver+u_\theta\vet+u_\phi\vep,\\\curl\vu =\,&{1\over r\sin\lat}\left(\pat(u_\lgt\sin\lat)-\pap u_\lat\right)\ver\\
  &+{1\over r}\left({1\over\sin\lat}\pap \ur-\pr(ru_\lgt)\right)\vet\\
  &+{1\over r}\left(\pr(r u_\lat)-\pat(\ur)\right)\vep\,,\\
\curlh\vuh=\,&{1\over \sin\lat}\left(\pat(u_\lgt\sin\lat)-\pap u_\lat\right)\,.\end{split}\] 

\begin{proof}[Proof of Lemma \ref{lm:aver:Leray}]  
Apply barotropic averaging \eqref{def:aver:1} to  $\vu=\cP\vu+\cQ\vu$ and get 
\(\aver{\vu}=\aver{\cP\vu}+\aver{\cQ\vu}.\)
Since by definition we also have $\aver{\vu}=\cPh\aver{\vu}+ \cQh\aver{\vu}$, it suffices to prove
\be\label{cQ:aver}\aver{ \cQ\vu}= \cQh\aver{\vu}.\ee
Also, since $\H^1(\Omega)$ is dense in $\LL(\Omega)$ and barotropic averaging is apparently bounded from $\LL(\Omega)$ to $\LL(\mS)$, we only consider $\vu\in \H^1 (\Omega)$ so that $\vu\ccdot\nm$ is defined on $\pa\Omega$.

By   elliptic PDE theory,
 we have  \be\label{def:cQ}
{\cQ\vu=\nabla f} \text{ where  } \left\{\begin{split}\Delta f&=\dv\vu&\text{in }\Omega \\\nabla f\ccdot\nm&=\vu\ccdot\nm&\text{in } {\pa\Omega}\end{split}\right.\ee
Similar equations hold for $\cQh$ (c.f. \cite[(2.1), (2.2)]{chengmahalov2012}),
\be\label{def:cQh}
{\cQh\vuh=\gradh\Deltah^{-1}\dvh\vuh} \ee
Here, $\Deltah^{-1}$ is defined using spherical harmonics, and maps between  scalar functions of zero mean --- note $\int_{\mS}\dvh\vuh=0$ by Stokes' lemma and $\pa\mS=\emptyset$.

Let $\vu=w\ver+\vuh$. Use \eqref{3D:2D} to reformulate \eqref{def:cQ} as,
\be\label{Poisson:3D:2D} \left\{\ba  r^{-2}&\pr(r^2\pr  f )+r^{-2}\Deltah  f   \\ = & \,r^{-2}\pr(r^2 \ur)+r^{-1}\dvh\vuh,&\text{ in }\OmegaSH,\\
  &\quad\pr  f  =\ur,&\text{ on }\pa\OmegaSH.\ea\right.
\ee
Multiply the first equation with $r^2$ and integrate it in $r$,
\[r^2\pr f \Big|_\Ri^\Ro+\int_\Ri^\Ro\Deltah  f \,dr= r^2\ur\Big|_\Ri^\Ro+\int_\Ri^\Ro r\dvh \vuh\,dr.\]
Then, apply the second equation of \eqref{Poisson:3D:2D}  to cancel out the boundary terms,
\[ \int_\Ri^\Ro\Deltah  f \,dr=  \int_\Ri^\Ro r\dvh \vuh\,dr.\]
Since $\vu\in \H^1(\Omega)$, we can exchange integrals and derivatives above, and invoke definitions of barotropic averaging in \eqref{def:aver:1}, \eqref{def:aver:f} to obtain
 \be\label{Poisson:S2}\Deltah\aver{ f }=\dvh\aver{\vu},\quad\text{i.e.,}\quad \gradh\aver{ f }=\cQh\aver{\vu},\ee
where $\cQh$ follows   \eqref{def:cQh}.

On the other hand, apply barotropic averaging \eqref{def:aver:1} on   the first equation of \eqref{3D:2D:b} with the same $f$ as in \eqref{def:cQ} to obtain
\[\aver{ \cQ\vu}=\aver{ \grad f }={1\over2\thick}\int_\Ri^\Ro\gradh  f \,dr= \gradh\aver{ f }.\]
 Combine it with \eqref{Poisson:S2} on $\mS$, we  prove Lemma \ref{lm:aver:Leray}.
 \end{proof}

\subsection{\bf Dynamics of barotropic averages on $\mS$.}

We now apply barotropic averaging \eqref{def:aver} on the 3D Navier-Stokes equations \eqref{Navier-Stokes:cL} with the help of Lemma \ref{lm:aver:Leray} and identities \eqref{3D:2D}.
\begin{lemma}\label{lemma:Navier-Stokes:average}
The solution to \eqref{Navier-Stokes:cL}  subject to the Navier boundary conditions \eqref{BC:Navier} with $\vg \equiv{\bf0}$  satisfies
\be\label{Navier-Stokes:average}\pt\aver{\vu}+\cPh\aver{\grad_\vu\vu}={1\over\ep}\cPh(\aver{\vu}\times\ver\cos\lat) +\mu\cPh\aver{\Delta\vu}+\aver{\vFe} \ee
subject to $\dvh\aver{\vu}=0$. Here, $\vu=w\ver+\vuh$ so that $\aver{\vu}=\aver{\vuh}$.

Furthermore,  the viscosity term from above  equals  
 \be\label{aver:Delta}\cPh\aver{\Delta\vu}=\cPh\Deltah\aver{r^{-2}\vuh} +2 \cPh\aver{r^{-1}\prs\vu}.\ee
\end{lemma}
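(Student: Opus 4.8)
The plan is to apply the barotropic averaging operator $\aver{\,\cdot\,}$ to the pressure-free formulation \eqref{Navier-Stokes:cL} term by term, turning every $\cP$ into $\cPh$ by Lemma \ref{lm:aver:Leray}. Since $\aver{\,\cdot\,}$ is an $r$-integral it commutes with $\pt$, giving $\pt\aver{\vu}$; by the very definition \eqref{def:aver} one has $\aver{\vu}=\aver{\vuh}$, which also identifies the reduced unknown. The nonlinear term becomes $\cPh\aver{\grad_\vu\vu}$ and the forcing contributes $\aver{\vFe}$ (with $\cP\vFe$ averaged through Lemma \ref{lm:aver:Leray}). For the Coriolis term I would use $\vu\times\ver=\vuh\times\ver$, the radial part dropping because $\ver\times\ver=0$, together with the fact that $\cos\lat$ and $\ver$ depend only on the angular variables; pulling these through the $r$-average yields $\frac1\ep\cPh(\aver{\vu}\times\ver\cos\lat)$. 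Finally, the constraint $\dvh\aver{\vu}=0$ follows by inserting $\dv\vu=0$ from \eqref{3D:2D:a}, multiplying by $r^2$, integrating in $r$, and observing that the boundary terms $r^2w$ vanish because the zero-flux condition \eqref{BC:zero:f} forces $w=\vu\ccdot\ver=0$ on $\pa\OmegaSH$.

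The substantive part is the viscosity identity \eqref{aver:Delta}. Because $\dv\vu=0$, I would write $\Delta\vu=-\curl\curl\vu$ and apply the curl formula \eqref{3D:2D:a} twice; extracting the horizontal component gives a sum of three pieces, namely an angular piece proportional to $r^{-2}(\gradh\curlh\vuh)\times\ver$, a radial piece $r^{-1}\pr^2(r\vuh)=r^{-2}\pr(r^2\pr\vuh)$, and a pure horizontal gradient proportional to $r^{-2}\gradh w$. Applying $\cPh$ annihilates the last piece since $\cPh\gradh(\,\cdot\,)=0$. The angular piece equals, modulo a further $\gradh\dvh\vuh$ contribution also removed by $\cPh$, the quantity $r^{-2}\Deltah\vuh$ for the intrinsic vector Laplace--Beltrami on $\mS$; as $r^{-2}$ and $\Deltah$ commute with the $r$-average, this produces $\cPh\Deltah\aver{r^{-2}\vuh}$. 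The radial piece I would reduce by carrying out the $r$-weighted integral and integrating by parts in $r$, evaluating the boundary contributions via \eqref{Navier:kappa:SH:a} with $\vg\equiv\mathbf{0}$, so as to collapse the second-order radial term to the first-order form $2\cPh\aver{r^{-1}\prs\vu}$.

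The main obstacle is the bookkeeping of the vector Laplacian in curvilinear coordinates: unlike the scalar rule \eqref{3D:2D:b}, $\Delta\vu$ does not act componentwise in the moving frame $(\ver,\vet,\vep)$, so one must track the metric and curvature terms carefully and recognize exactly which combination constitutes the intrinsic $\Deltah\vuh$ on $\mS$, which combinations are pure $\gradh$'s that $\cPh$ kills, and which are genuinely radial. Tightly coupled to this is the radial reduction: the weight $r$ and the Navier condition \eqref{Navier:kappa:SH:a} have to be used in tandem, and verifying that the $r$-integration by parts leaves precisely $2\cPh\aver{r^{-1}\prs\vu}$ (rather than an extra boundary remainder) is the step I would check most carefully. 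The route via $\curl\curl$ keeps the computation entirely inside the identities \eqref{3D:2D} already available, but one could equally assemble $(\Delta\vu)_h$ from the explicit component formulas and match the two as a consistency check.
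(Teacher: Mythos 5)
Your proposal follows essentially the same route as the paper: both establish $\dvh\aver{\vu}=0$ and the averaged equation identically, and both prove \eqref{aver:Delta} by writing $\Delta\vu=-\curl\curl\vu$, applying \eqref{3D:2D:a}, removing the pure-gradient part with $\cPh$ via the identity $\Deltah\vuh=\ver\times\gradh\curlh\vuh+\gradh\dvh\vuh$, and collapsing the radial term to boundary contributions evaluated through \eqref{Navier:kappa:SH:a} with $\vg\equiv{\bf0}$. The only immaterial differences are organizational --- the paper averages the curl formula applied to $\curl\vu$ (keeping boundary terms from the start) instead of expanding the double curl pointwise first, and your gradient piece is actually $-r^{-1}\gradh\prs w$ rather than one proportional to $r^{-2}\gradh w$, though either way it is a horizontal gradient annihilated by $\cPh$ (indeed it vanishes after averaging, by the zero-flux condition).
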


\begin{proof}

First, integrate $r^2\dv\vu=0$ in $r$ and invoke the first identity of \eqref{3D:2D:a}
\[r^2w\Big|_\Ri^\Ro+\int_\Ri^\Ro r\dvh(\vuh)=0.\]
By the zero-flux boundary condition $\vu\ccdot\nm\bc=0$, the first term vanishes, and therefore
we prove the incompressibility condition \be\label{inc:proven}\dvh\aver{\vu}=0.\ee

For the Coriolis term, $  {\cP( \vu\times\ver \cos\lat)}$, Lemma \ref{lm:aver:Leray} implies
\[ \aver{\cP( \vu\times\ver \cos\lat)}=\cPh(\aver{\vu\times\ver \cos\lat})=\cPh(\aver{\vu}\times\ver \cos\lat).\]
 
Then, upon barotropic averaging and invoking Lemma \ref{lm:aver:Leray}, the 3D Navier-Stokes \eqref{Navier-Stokes:cL} is transformed into \eqref{Navier-Stokes:average} subject to
 $\dvh\aver{\vu}=0$.

Now we show \eqref{aver:Delta}. 
By $\dv\vu=0$, we have,
\be\label{curlcurl} \aver{\Delta\vu}=-\aver{\curl\curl\vu}.\ee
For the RHS, first apply the second identity of \eqref{3D:2D:a} to get
\[\ba-\aver{\curl\vu}&=-{1\over2\thick}\int_\Ri^\Ro\big(\gradh\ur-\pr(r\vuh)\big)\times\ver\,dr\\
&=-{1\over2\thick}\left(\int_\Ri^\Ro\gradh {\ur}\,dr-r\vuh\Big|_\Ri^\Ro\right)\times\ver\ea.\]
Then,   substitute $\vu$ by $\curl\vu$  and correspondingly substitute $w=(\curl\vu)\ccdot\ver=r^{-1}\curlh\vuh$,
\[\begin{aligned}&\quad-\aver{\curl\curl\vu}\\&=-{1\over2\thick}\left(\int_\Ri^\Ro\gradh {r^{-1}\curlh\vuh}\,dr- r\Projh(\curl\vu) \Big|_\Ri^\Ro\right)\times\ver\\&= \ver\times\gradh\curlh\aver{r^{-2}\vuh} +{r\over2\thick}\Projh(\curl\vu) \Big|_\Ri^\Ro\times\ver\\&= :I+II.\end{aligned}\]

Thus, we transform the viscous term in \eqref{Navier-Stokes:average} into \be\label{cPh:I:II}\cPh\aver{\Delta\vu}=-\cPh\aver{\curl\curl\vu}=\cPh I+\cPh II.\ee

For the $\cPh I$ term, apply $\cPh$ on the identity $$\Deltah\vuh=\ver\times\gradh\curlh\vuh+\gradh\dvh\vuh$$ and then use $\cPh\gradh\equiv0$ to rewrite
\be\label{cPh:I}\cPh I=\cPh\Deltah\aver{r^{-2}\vuh}\,.\ee

For the $\cPh II$ term, invoke the second identity of \eqref{3D:2D:a} and the zero-flux boundary condition $\vu\ccdot\nm\bc=0$ to obtain
\[\Projh(\curl\vu)\Big|_\Ri^\Ro= -r^{-1}\pr(r\vuh)\times\ver\Big|_\Ri^\Ro\]
so that,
\[II= {1\over2\thick}\pr(r\vuh)\Big|_\Ri^\Ro.\]
By \eqref{Navier:kappa:SH},   the Navier boundary conditions \eqref{BC:Navier} with $\vg \equiv{\bf0}$ imply
$\pr\vuh\bc=r^{-1}\vuh$.   Therefore,
\[II= {1\over\thick}\vuh\Big|_\Ri^\Ro= 2\aver{r^{-1}\pr\vu}.\]
Combine this with \eqref{cPh:I:II}, \eqref{cPh:I}  to prove \eqref{aver:Delta}.\end{proof}

 \bigskip \section{\bf Proof of   main theorem for Navier-Stokes equations}\label{sec:shell:thm}
In this section, we following the framework in \cite{chengmahalov2012} to prove Theorem \ref{thm:shell}.

First, define\be\label{def:cLh}\cLh\aver{\vu}:=\cPh(\aver{\vu}\times\ver\cos\lat)\ee
and rewrite \eqref{Navier-Stokes:average} as
\[ \ba\cLh\aver{\vu}& =\ep\Big[\pt\aver{\vu}+\cPh\aver{\grad_\vu\vu}+\\&-2\mu\cPh\aver{r^{-1}\prs\vu}-\mu\cPh\Deltah\aver{r^{-2}\vuh}-\aver{\vFe}\Big] .\ea\]
Then, take the time-averages of each term and exchange time integration and $\cLh$,
\be\label{cLh:vw} \cLh \intT \aver{ \vu} =\ep\left[{\aver{\vu}(T)-\aver{\vu}_0 } + A_1+A_2+A_3-\aver{\vFe}\, \right] \ee
where
\[ \ba A_1:=\,& \intT\cPh\aver{\grad_\vu\vu}\,dt
\\A_2:=\,& -2\mu\intT\cPh\aver{r^{-1}\prs\vu}\,dt
\\A_3:=\,&-\mu\intT\cPh\Deltah\aver{r^{-2}\vuh}\,dt\ea\]

We will then estimate every term in the RHS of \eqref{cLh:vw} in terms of $H^k(\mS)$ norms. Note that
there are many equivalent   definitions of  Sobolev norms on a manifold  through the literature  (e.g. \cite{taylor2011}), all of which are independent of coordinate systems.  One definiation of $\|{f}\|_{H^k(\mS)}$ is
\(\sqrt{\int_{\mS}\Big[f\sum_{j=0}^{k}(- \Delta)^jf\Big]}\) for $k\ge0$. Then, by the Poincare's inequality, this definition is equivalent to \(\sqrt{\int_{\mS}\Big[f^2+f(- \Delta)^kf\Big]}\). In this article, all relavent scalar fields are of zero-mean, so that we adopt   the following definition
\be\label{def:fHK}\ba&\text{for scalar $f$ with }\int_{\mS}f=0,\\&\text{define }\|{f}\|_{H^k(\mS)}:=\sqrt{\int_{\mS}\Big[f (- \Delta)^k f\Big]}\ea\ee 
with integer $k\ge0$. 
 
Consequently, for a vector field $\vu$ on $\mS$ with Hodge Decomposition \[\vu=\gradh\Phi+\rgradh \Psi\mbox{ \quad with } \int_{\mS}\Phi=\int_{\mS}\Psi=0,\]we define its $H^{k}$ norm, among other equivalent versions, as  
\be\label{def:vuHk:general}\|\vu\|_{H^k(\mS)}:=\sqrt{\|\Phi\|^2_{H^{k+1}(\mS)}+\|\Psi\|^2_{H^{k+1}(\mS)}}.\ee
Note that, here and below, we always impose zero-mean on $\Phi$ and $\Psi$.

With the help of spherical harmonics, we extend \eqref{def:fHK} and \eqref{def:vuHk:general} to $H^\alpha$ for any $\alpha\in\mR$.
\begin{definition}\label{def:Hk:series}
Let $\{Y_l^m\}$ for $l=0,1,\ldots$ and $m=-l,\ldots,-1,0,1,\ldots,l$ be the set of spherical harmonics forming an orthonormal basis of $L^2$ such that\[\Deltah Y_l^m=-l(l+1)Y_l^m.\] Let $\vu=\rgrad\Psi$ be any div-free velocity field in $L^2(\mS)$ with
\[\Psi=\sum_{l=1}^\infty\sum_{m=-l}^l \psi_{l}^{m}Y_l^m,\quad\mbox{ where }\quad \psi_{l}^{m}=\langle{\Psi}, Y_l^m\rangle_{L^2(\mS)}.\]
Then, for any real number $\alpha$,
\[\label{P:id} \|\vu\|_{H^\alpha(\mS)}=\|\Psi\|_{H^{\alpha+1}(\mS)}:=\sqrt{\sum_{l=1}^\infty\sum_{m=-l}^l   (l^2+l)^{\alpha+1}\big|\psi_l^m\big|^2}.\]
\end{definition}

This definition allows us to easily adapt the proof of Theorem 4.1 of \cite{chengmahalov2013} and reach the next lemma (whose proof is skipped).

\begin{lemma}\label{lm:sphere}  Let $\alpha\in\mR$.
 For any   horizontal vector field $\vuh$ on $\mS$ subject to $\dvh\vuh=0$, 
\[\|(1-\Pzonal)\vuh\|_{H^{\alpha}(\mS)}\le \| \cLh\vuh \|_{H^{\alpha+2}(\mS)}.\] 
\end{lemma}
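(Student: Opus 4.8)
The plan is to block-diagonalize $\cLh$ in the spherical-harmonic basis and recognize it as a constant multiple of the Rossby-wave operator $\Deltah^{-1}\pap$, whose kernel is exactly the zonal subspace. First I would use that $\mS$ carries no nontrivial harmonic one-forms to write the div-free field as $\vuh=\rgradh\Psi$ with $\Psi=\sum_{l\ge1}\sum_m\psi_l^m Y_l^m$ of zero mean. A short computation in spherical coordinates shows $\rgradh\Psi=(\pat\Psi)\vep-\tfrac{1}{\sin\lat}(\pap\Psi)\vet$, so $\Pzonal\vuh=\rgradh$ of the $m=0$ part of $\Psi$; hence, by Definition \ref{def:Hk:series},
\[\|(1-\Pzonal)\vuh\|_{H^\alpha(\mS)}^2=\sum_{l}\sum_{m\ne0}(l^2+l)^{\alpha+1}\big|\psi_l^m\big|^2.\]

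The key step is to compute $\cLh\vuh=\cPh(\vuh\times\ver\cos\lat)$ from \eqref{def:cLh}. Since $\cPh$ removes only a $\gradh$-part, which is curl-free, it leaves the scalar vorticity $\curlh$ unchanged; thus $\cLh\vuh$ is again div-free, say $\cLh\vuh=\rgradh\tPsi$, with vorticity $\Deltah\tPsi=\curlh(\cos\lat\,\vuh\times\ver)$. Using the vector identity $\vuh\times\ver=\gradh\Psi$ together with the surface-curl formula quoted before Lemma \ref{lm:aver:Leray}, the two mixed second-derivative terms cancel and one is left with
\[\Deltah\tPsi=\curlh(\cos\lat\,\gradh\Psi)=-\pap\Psi.\]
Passing to coefficients via $\pap Y_l^m=imY_l^m$ and $\Deltah Y_l^m=-l(l+1)Y_l^m$ gives $\widetilde{\psi}_l^m=\tfrac{im}{l(l+1)}\psi_l^m$, which confirms that $\cLh$ preserves the azimuthal wavenumber $m$ and annihilates precisely the $m=0$ (zonal) modes, so that $\ker\cLh$ equals the zonal subspace.

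It then remains to compare the two norms. By Definition \ref{def:Hk:series},
\[\|\cLh\vuh\|_{H^{\alpha+2}(\mS)}^2=\|\tPsi\|_{H^{\alpha+3}(\mS)}^2=\sum_l\sum_m(l^2+l)^{\alpha+3}\frac{m^2}{(l^2+l)^2}\big|\psi_l^m\big|^2=\sum_l\sum_m(l^2+l)^{\alpha+1}m^2\big|\psi_l^m\big|^2.\]
Because $m^2\ge1$ whenever $m\ne0$, while the $m=0$ terms drop from both expressions, this last sum dominates $\sum_{m\ne0}(l^2+l)^{\alpha+1}|\psi_l^m|^2$ term by term; taking square roots yields the claimed inequality with constant one.

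I expect the main obstacle to be the explicit identity $\curlh(\cos\lat\,\gradh\Psi)=-\pap\Psi$, i.e. verifying that the Leray-projected, latitude-weighted Coriolis term is exactly the longitudinal derivative of the stream function, and hence that $\ker\cLh$ is precisely the zonal flows. Everything else is bookkeeping with the spherical-harmonic norms of Definition \ref{def:Hk:series}; the loss of two derivatives (so that $H^{\alpha+2}$ controls $H^\alpha$) is simply the factor $(l^2+l)^{-1}$ produced by inverting $\Deltah$ in the relation $\widetilde{\psi}_l^m=\tfrac{im}{l(l+1)}\psi_l^m$. Since this mirrors Theorem~4.1 of \cite{chengmahalov2013}, the computation should transfer with only the cosmetic change of working at a general real exponent $\alpha$.
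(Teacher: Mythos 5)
Your proof is correct and is essentially the argument the paper relies on: the paper skips the proof, citing an adaptation of Theorem 4.1 of \cite{chengmahalov2013} via Definition \ref{def:Hk:series}, which is exactly your spherical-harmonic diagonalization. The key identity $\curlh(\cos\lat\,\gradh\Psi)=-\pap\Psi$, the resulting multiplier $im/(l^2+l)$ on each mode (so that $\ker\cLh$ is the zonal subspace), and the bound $m^2\ge1$ on non-zonal modes constitute precisely that argument, with the loss of two derivatives coming from inverting $\Deltah$.
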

Note that $(1-\Pzonal)$ effectively extracts the non-zonal component of a velocity field.

\begin{proof}[Proof of Theorem \ref{thm:shell}]
 Let $\Omega$ stand for the three-dimensional shell domain defined in \eqref{def:OmegaSH} for the rest of the proof. We will also use without references the integrating-by-parts formulas on $\mS$ which can be found in e.g. \cite[(A.19)-(A.22)]{chengmahalov2012}.
 
Under Definition \ref{def:Hk:series}, it is straightforward to verify that $H^\alpha(\mS)$ and $H^{-\alpha}(\mS)$ are dual spaces with respect to the $L^2(\mS)$ inner product, namely,
\be\label{dual:Ha}\|\vu\|_{H^{-\alpha}(\mS)}=\max_{\vu'\ne0}\dfrac{\langle\vu,\vu'\rangle_{L^2(\mS)}}{\|\vu\|_{H^\alpha(\mS)}}.\ee
Then, by Lemma \ref{lm:sphere} and \eqref{inc:proven}, it suffices to estimate \[ \Big\langle  \cLh\intT\aver{ \vu},\vuh'\Big \rangle_{L^2(\mS)}\]
for  smooth, testing  vector field $\vuh'$ that is tangent to $\mS$. Since the definition \eqref{def:cLh} implies $\dvh\cLh=0$, we can further impose $\dvh{\vuh'}=0$ so that for any $\vuh''$ tangent to $\mS$,
\be\label{vu12}\big\langle  \cPh\vuh'',\vuh'\big \rangle_{L^2(\mS)}=\big\langle  \vuh'',  \vuh'\big \rangle_{L^2(\mS)}.\ee

By   \eqref{cLh:vw}, it suffices to make the following estimates. (Recall definition $\averM:=\|\vu_0\|_{L^2(\Omega)}/\sqrt{2\thick}$.)

$\bullet$ Estimate of $\aver{\vu}{(T)}$ and $\aver{\vu}_0$. 
\[\ba&2\thick\,\Big\langle\aver{\vu}{(T )},\vuh'\Big\rangle_{L^2(\mS)}
\\= &\int_\Ri^\Ro\int_{\mS}r{\vu(T )}\ccdot\vuh'
\\=\,&\Big\langle{\vu(T )},r^{-1}\vuh'\Big\rangle_{L^2(\Omega)}
\\\le\,& \|\vu(T)\|_{L^2(\Omega)}\|r^{-1}\vuh' \|_{L^2(\Omega)}
\\=\,& \|\vu(T)\|_{L^2(\Omega)}\| \vuh' \|_{L^2(\mS)}\sqrt{2\thick}
\ea\]
Since $\|\vu(T )\|_{L^2(\Omega)}\le \|\vu_0\|_{L^2(\Omega)}=\sqrt{2\thick}\averM$ by Proposition \ref{prop:energy:decrease},  we obtain
\[\|\aver{\vu}(T )\|_{L^{2}(\mS)}\le {\averM},\quad\text{and similarly }\quad \|\aver{\vu}_0\|_{L^{2}(\mS)}\le \averM \]

$\bullet$ Estimate of $A_1$. By \eqref{vu12}
\[\ba&2\thick\,\Big\langle\cPh\aver{\grad_\vu\vu},\vuh'\Big\rangle_{L^2(\mS)}
\\= &\int_\Ri^\Ro\int_{\mS}r\big[\nabla\ccdot(\vu\otimes\vu)\big]\ccdot\vuh' 
\\=\,&\Big\langle\nabla\ccdot(\vu\otimes\vu),r^{-1}\vuh'\Big\rangle_{L^2(\Omega)}
\\=\,&\Big\langle \vu\otimes\vu ,\nabla(r^{-1}\vuh')\Big\rangle_{L^2(\Omega)}\qquad\ldots\text{ by }\vu\ccdot\nm\bc=0
\\\le\,&\|\vu\|^2_{L^2(\Omega)}|\nabla(r^{-1}\vuh')|_{L^\infty(\Omega)}
\\\le \, &C\|\vu\|^2_{L^2(\Omega)}|\vuh'|_{W^{1,\infty}(\mS)}
\\\le\,&C_\beta\|\vu\|^2_{L^2(\Omega)}\|\vuh'\|_{H^{2+{\beta} }(\mS)}\qquad\text{for }\beta>0\\&\qquad\qquad \ldots\text{by Sobolev imbedding.}\ea\]
Since $\|\vu(T )\|_{L^2(\Omega)}\le \|\vu_0\|_{L^2(\Omega)}=\sqrt{2\thick}\averM$ by Proposition \ref{prop:energy:decrease}, we use the duality relation \eqref{dual:Ha} to obtain
\[ \|\cPh\aver{\grad_\vu\vu}\|_{H^{-2-{\beta}}(\mS)}\le {C_\beta{\averM}^2 },\]
and integrating in time gives\[\|A_1\|_{H^{-2-{\beta}}(\mS)}\le C_\beta T{\averM}^2 \]
for $\beta>0$.

$\bullet$ Estimate of $A_2$. By \eqref{vu12}
\[\ba&2\thick\,\Big\langle\cPh\aver{r^{-1}\prs\vu},\vuh'\Big\rangle_{L^2(\mS)}\\
=\,&\int_\Ri^\Ro\int_{\mS}\prs\vu\ccdot\vuh'
\\=\,&\Big\langle\prs\vu,r^{-2}\vuh'\Big\rangle_{L^2(\Omega)}
\\\le\,&\|\nabla\vu\|_{L^2(\Omega)} \|r^{-2}\vuh'\|_{L^2(\Omega)}
\\\le\,&C\sqrt{2\thick}\|\nabla\vu\|_{L^2(\Omega)}\|\vuh'\|_{L^2(\mS)}.\ea\]
Then, integrate in time to get,
\[\ba&\Big\langle A_2,\vuh'\Big\rangle_{L^2(\mS)}\\\le\,&  {\mu\over\sqrt{2\thick}}\,\|\vuh'\|_{L^2(\mS)}\,\int_0^T\|\nabla\vu\|_{L^2(\Omega)}
\\\le\,& {\mu\over\sqrt{2\thick}}\,\|\vuh'\|_{L^2(\mS)}\,\sqrt{T\int_0^T\|\nabla\vu\|^2_{L^2(\Omega)} }.\ea\]
Since by the enstrophy estimate \eqref{grad:u:est}, 
$$\ba\sqrt{\int_0^T \|\nabla\vu\|^2_{L^2(\Omega)}}&\le\sqrt{ \|\vu_0\|^2\left(1+{C\mu T }\right)/\mu}\\
&=\sqrt{ 2\thick\averM^2\left(1+{C\mu T }\right)/\mu}\, ,\ea$$ 
we obain
\[\|A_2\|_{L^2(\mS)}\le  C\sqrt{\mu T(1+{C\mu T})} \,\averM\]

$\bullet$ Estimate of $A_3$.  By \eqref{vu12} and Green's identity on $\mS$ (c.f. \cite[(A.22)]{chengmahalov2012})
\[\ba&2\thick\,\Big\langle\cPh\Deltah\aver{r^{-2}\vuh},\vuh'\Big\rangle_{L^2(\mS)} 
\\=\,& 2\thick\,\Big\langle\aver{r^{-2}\vuh},\Deltah\vuh'\Big\rangle_{L^2(\mS)} 
\\=\,&\int_\Ri^\Ro\int_{\mS}r^{-1}\vuh\ccdot\Deltah\vuh'
\\=\,&\Big\langle r^{-2}\vuh,r^{-1}\Deltah\vuh'\Big\rangle_{L^2(\Omega)}
\\\le\,&\|r^{-2}\vuh\|_{L^2(\Omega)}\|r^{-1}\Deltah\vuh' \|_{L^2(\Omega)}
\\\le\,&C\|\vu\|_{L^2(\Omega)}\|\Deltah\vuh' \|_{L^2(\mS)}\sqrt{2\thick}.\ea\]
Since $\|\vu(T )\|_{L^2(\Omega)}\le \|\vu_0\|_{L^2(\Omega)}=\sqrt{2\thick}\averM$ by Proposition \ref{prop:energy:decrease}, we obtain
\[\|\cPh\Deltah\aver{ r^{-2}\vu}\|_{H^{-2}(\mS)}\le C{M_0 } , \; \text{ i.e. }\; \|A_3\|_{H^{-2}(\mS)}\le C{\mu T M_0 }\]
\end{proof}

 \bigskip \section{\bf Fast Rotating MHD model}    \label{sec:MHD}
 
Let $\cP$ once again denote the Leray-Helmholtz projection. In other words, for any vector field $\vu\in L^2(\mR^3)$,
\[\vu=\cP\vu+\grad\cQ\vu\quad\]where\be\label{def:cPcQ}\cP\vu=-\curl\Delta^{-1}\curl\vu,\quad\cQ\vu=\Delta^{-1}\dv\vu.\ee
By Fourier transform, we also have
\be\label{def:cPcQ:xi}\FT{\cP\vu}(\xi)=-\dfrac{i\xi\times(i\xi\times\FT{\vu})}{|i\xi|^2},\quad \FT{\cQ\vu}(\xi)=\dfrac{i\xi\ccdot\FT{\vu}}{|i\xi|^2}.\ee
Define a skew-self-adjoint operator acting on the velocity field $\vu$ and magnetic field $\vb$,
\be\label{def:sL}\sL\bpm\vu\\\vb\epm:=\bpm\cP(\vu\times\vez+(\curl\vb)\times\vez)\\\curl(\vu\times\vez)\epm.\ee
Then, the system \eqref{vu:vb:MHD}   can be reformulated as
\be\label{sL:MHD}\pt\bpm\vu\\\vb\epm+\bpm\cP(\vu\cn\vu-\vb\cn\vb)\\\vu\cn\vb-\vb\cn\vu\epm={1\over\ep}\sL\bpm\vu\\\vb\epm\ee
where we used identity $2(\curl\vb)\times\vb-2\vb\cn\vb=\grad|\vb|^2$ to transform \eqref{vu:MHD} and identity $\curl(\vu\times\vb)=\vu(\dv\vb)-\vb(\dv\vu)+\vb\cn\vu-\vu\cn\vb$ to transform \eqref{vb:MHD}.

\subsection{Kernel of the large operator $\sL$}
It is an elementary calculation to verify that
\[\dv\vu=0\implies\curl(\vu\times\vez)=\pa_z\vu.\]
Combine it with \eqref{def:cPcQ} and the fact $\dv\vu=\dv\curl\vb=0$ to transform \eqref{def:sL} into
\[ \sL\bpm\vu\\\vb\epm=\bpm-\curl\Delta^{-1}\pa_z(\vu+\curl\vb)\\\pa_z\vu\epm.\]
Thus, by $\vb=\cP\vb=-\curl\Delta^{-1}\curl\vb$, this implies
\be\label{sL}\sL\bpm\vu\\\vb\epm= \bpm-\curl\Delta^{-1}\pa_z\vu+\pa_z\vb\\\pa_z\vu\epm\ee
Therefore,\[(\vu,\vb)\in\ker\sL\iff\vu=\vu(x,y)\text{ and }\vb=\vb(x,y)\]
When restricted to $L^2$ space, $\ker\sL=\{\bf{(0,0)}\}$.

\subsection{Control $L^\infty$ norm using $\pa_z$ derivatives}Given $H^k$ initial data ($k>5/2$),
  we can apply the standard energy method to obtain $O(1)$ estimates for the $L^p$ norms ($2\le p\le\infty$) of the solution and its first spatial derivatives   in a finite time interval. Upon time-averaging, such $O(1)$ estimates give rise to $O(\ep)$ estimates for the $L^p$ norms ($2\le p\le\infty$) of $\intT\sL\bpm\vu\\\vb\epm$. By \eqref{sL}, the $O(\ep)$ estimates also work for $\intT(\pa_z \vu,\pa_z\vb)$ in terms of $L^p$ norms\footnote{For $\pa_z\vb$, the range of $p$ is reduced to $6\le p\le\infty$ due to the negative derivative in the first line of \eqref{sL}}.

What estimates can be obtained for $\intT(\vu,\vb)$, the time-average of the solution itself?  Because of the special role of $\pa_z$ derivatives, we state and prove the following inequality regarding function $f$ defined in $\mR^1$.
\be\label{Nash}\|f\|_\Lp{\infty}{\mR^1}^2 \le C\|f\|_\Lp{2}{\mR^1} \|f'\|_\Lp{2}{\mR^1} \ee
Note that  once   the $L^\infty$ norm is estimated,   standard interpolation techniques can help control the rest of the $L^p$ norms ($2<p<\infty$).
 \begin{proof}[Proof of \eqref{Nash}] 

For any positive $\rho$, we estimate the $L^1$ norms of $\FT{f}(\xi)$ over frequencies lower and higher than $\rho$ respectively. The Holder's inequality is applied in both cases.
\[\ba\int_{-\rho}^\rho|\FT{f}(\xi)|\,d\xi&\le\Big[\int_{-\rho}^\rho|\FT{f}(\xi)|^2\Big]^{1\over2}\Big[\int_{-\rho}^\rho1\Big]^{1\over2}\\&\le C\|f\|_\Lp{2}{\mR^1}\rho^{1\over2}.\\
\int_{|\xi|>\rho}|\FT{f}(\xi)|\,d\xi&\le\Big[\int_{|\xi|>\rho}|\xi\FT{f}(\xi)|^2\Big]^{1\over2}\Big[\int_{|\xi|>\rho}\xi^{-2}\Big]^{1\over2}\\&\le C\|f'\|_\Lp{2}{\mR^1}\rho^{-{1\over2}}.\ea\]
Therefore,
\[\ba\|f\|_\Lp{\infty}{\mR^1}&\le\int_{\mR^1}|\FT{f}(\xi)|\,d\xi\\&\le C\|f\|_\Lp{2}{\mR^1}\rho^{1\over2}+C\|f'\|_\Lp{2}{\mR^1}\rho^{-{1\over2}}.\ea\]
Optimizing the RHS over $\rho\in(0,\infty)$, we prove \eqref{Nash}.
\end{proof}
We are ready to state and prove the following lemma
\begin{lemma}\label{est:R2}Given function $g(x,y,z)$, 
\[\|g\|_{L_{z}^\infty( H^m_{xy}(\mR^2))}\le C  {\|g\|_{H^m(\mR^3)}^{1\over2}\|\pa_zg\|_{H^m(\mR^3)}^{1\over2}}.\]
Consequently,  for  and $m'\ge0$ and $k>1$, \[ \|g\|_{W^{m',\infty}(\mR^3)}\le C  \|g\|_{H^{m'+k}(\mR^3)}^{1\over2}\|\pa_zg\|_{H^{m'+k}(\mR^3)}^{1\over2} .\]
 \end{lemma}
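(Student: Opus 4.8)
The plan is to reduce both inequalities to the one-dimensional estimate \eqref{Nash} applied in the $z$ variable, using a partial Fourier transform in the horizontal variables to carry along the $H^m_{xy}$ norm. First I would fix the horizontal frequency $\xi\in\mR^2$ and write $h_\xi(z)$ for the Fourier transform of $g$ in $(x,y)$ only, so that
\[\|g(\cdot,\cdot,z)\|_{H^m_{xy}(\mR^2)}^2=\int_{\mR^2}(1+|\xi|^2)^m|h_\xi(z)|^2\,d\xi.\]
The key move is to apply \eqref{Nash} to each $h_\xi(\cdot)$ as a function of $z$, obtaining $|h_\xi(z)|^2\le C\|h_\xi\|_{\Lp{2}{\mR^1}}\|h_\xi'\|_{\Lp{2}{\mR^1}}$ with a bound uniform in $z$. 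Multiplying by $(1+|\xi|^2)^m$, integrating in $\xi$, and then using the Cauchy--Schwarz inequality in $\xi$ gives, for every $z$,
\[\|g(\cdot,\cdot,z)\|_{H^m_{xy}}^2\le C\Big(\!\int(1+|\xi|^2)^m\|h_\xi\|_{\Lp{2}{\mR^1}}^2\,d\xi\Big)^{\!1/2}\Big(\!\int(1+|\xi|^2)^m\|h_\xi'\|_{\Lp{2}{\mR^1}}^2\,d\xi\Big)^{\!1/2}.\]

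Next I would identify the two factors by Plancherel in $z$. Writing $\zeta$ for the frequency dual to $z$ and using $(1+|\xi|^2)^m\le(1+|\xi|^2+\zeta^2)^m$, the first factor is bounded by $\|g\|_{H^m(\mR^3)}^2$ and the second, since the $z$-Fourier symbol of $h_\xi'$ is $i\zeta\FT g$, by $\|\pa_z g\|_{H^m(\mR^3)}^2$. Because the left-hand bound holds for every $z$, I take the supremum over $z$ to conclude the first inequality.

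For the second inequality I would apply the first one to $\pa^\beta g$ for each multi-index $\beta$ with $|\beta|\le m'$, taking $m=k$, and precede it by the two-dimensional Sobolev embedding $H^k_{xy}(\mR^2)\hookrightarrow L^\infty(\mR^2)$, which is available since $k>1$:
\[\|\pa^\beta g\|_{\Lp{\infty}{\mR^3}}\le C\,\|\pa^\beta g\|_{L_z^\infty(H^k_{xy})}\le C\,\|\pa^\beta g\|_{H^k(\mR^3)}^{1/2}\|\pa_z\pa^\beta g\|_{H^k(\mR^3)}^{1/2}.\]
Writing $\pa_z\pa^\beta g=\pa^\beta(\pa_z g)$ and using $\|\pa^\beta g\|_{H^k}\le\|g\|_{H^{m'+k}}$ together with $\|\pa^\beta(\pa_z g)\|_{H^k}\le\|\pa_z g\|_{H^{m'+k}}$, I sum over $|\beta|\le m'$ to obtain the stated $W^{m',\infty}$ bound.

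The only genuine subtlety is the uniform-in-$z$ control in the first inequality: one must pass the pointwise bound from \eqref{Nash} through the $\xi$-integral \emph{before} taking the supremum in $z$. Applying \eqref{Nash} mode-by-mode in $\xi$ (rather than to the scalar function $z\mapsto\|g(\cdot,\cdot,z)\|_{H^m_{xy}}$) is what makes this clean, since it sidesteps having to differentiate a norm and estimate its derivative by $\|\pa_z g(\cdot,\cdot,z)\|_{H^m_{xy}}$. Everything else is routine Plancherel bookkeeping and the standard Sobolev embedding in two dimensions.
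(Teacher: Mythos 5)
Your proof is correct, and while it rests on the same key lemma \eqref{Nash}, your decomposition is genuinely different from the paper's. You work on the Fourier side in $(x,y)$: apply \eqref{Nash} to each partial Fourier mode $h_\xi(z)$, then Cauchy--Schwarz in $\xi$, then Plancherel in $z$. The paper stays in physical space: it applies \eqref{Nash} to $z\mapsto\pa_x^\alpha\pa_y^\beta g(x,y,z)$ for each fixed $(x,y)$ and each $\alpha+\beta\le m$, then uses H\"older's inequality in $(x,y)$ and sums over the multi-indices. The two arguments are essentially dual to each other via Plancherel, and each buys something: your frequency version carries the $H^m_{xy}$ weight $(1+|\xi|^2)^m$ in one stroke, so no summation over derivatives is needed and the argument extends verbatim to non-integer $m$; the paper's physical-space version needs no Fourier bookkeeping and works directly with the derivative definition of the norm. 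One further difference: to reach the supremum in $z$, the paper does not assert a pointwise-in-$z$ bound but instead bounds the average of $\|\pa_x^\alpha\pa_y^\beta g(\cdot,\cdot,z)\|_{L^2_{xy}}^2$ over an arbitrary interval $a<z<b$ and uses arbitrariness of $a,b$ --- a Lebesgue-point device suited to rough $g$; your pointwise statement is equally legitimate since for a.e.\ $\xi$ one has $h_\xi\in H^1(\mR_z)$, hence $h_\xi$ is continuous in $z$ (and the lemma is anyway applied to classical solutions). Your treatment of the second inequality --- applying the first bound to $\pa^\beta g$, $|\beta|\le m'$, with $m=k$ and the two-dimensional embedding $H^k_{xy}(\mR^2)\hookrightarrow L^\infty(\mR^2)$ for $k>1$ --- is exactly what the paper leaves implicit in its phrase ``follows from the Sobolev inequalities.''
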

\begin{proof}
Consider $\pa_x^\alpha\pa_y^\beta g(x,y,z)$ with $\alpha+\beta\le m$. For  any numbers $a<b$, we estimate 
\[\ba&{1\over b-a}\int_a^b\|\pa_x^\alpha\pa_y^\beta g\|^2_{L^2_{xy}}dz\\=\,&{1\over b-a}\int_a^b\int_{\mR^2}\big[\pa_x^\alpha\pa_y^\beta g(x,y,z)\big]^2dxdydz\\
\le\,&\int_{\mR^2}\big\|\pa_x^\alpha\pa_y^\beta g(x,y,z)\big\|^2_{L^\infty_z(\mR^1)}dxdy\\
\le\,& C\int_{\mR^2}\Big[\int_{\mR^1}\big(\pa_x^\alpha\pa_y^\beta g\big)^2dz\Big]^{1\over2}\ccdot\Big[\int_{\mR^1}\big(\pa_z\pa_x^\alpha\pa_y^\beta g\big)^2dz\Big]^{1\over2}dxdy\\&\qquad\qquad\text{\ldots by \eqref{Nash}}\\
\le\,&C\Big[\int_{\mR^3} \big(\pa_x^\alpha\pa_y^\beta g\big)^2dzdxdy\Big]^{1\over2}\ccdot\Big[\int_{\mR^3} \big(\pa_z\pa_x^\alpha\pa_y^\beta g\big)^2dzdxdy\Big]^{1\over2} \\&\qquad\qquad\text{\ldots by H\"{o}lder's inequality in }\mR^2\\
\le\,&C \|g\|_{H^m(\mR^3)}\|\pa_zg\|_{H^m(\mR^3)}\ea\]
Because $a,b$ are arbitary, this implies $$\|\pa_x^\alpha\pa_y^\beta g\|^2_{L^\infty_z (L^2_{xy})}\le C \|g\|_{H^m(\mR^3)}\|\pa_zg\|_{H^m(\mR^3)}.$$
Summing up over all derivatives with $0\le\alpha+\beta\le m$, we complete the proof of the first inequality and the second one follows from the Sobolev inequalities.  
 \end{proof}

\subsection{Estimates on time-averages of $(\vu,\vb)$}

\begin{proof}[Proof of Theorem \ref{thm:MHD}]By taking  $\intT$ on \eqref{sL:MHD} and then taking the $H^{k-1}(\mR^3)$ norms, we have
the estimate
\be\label{est:sL}\left\|\intT\sL\bpm\vu\\\vb\epm\right\|_{H^{k-1}(\mR^3)}\le CM_0\left({1\over T}+M_0 \right)\ep.\ee
Using the second component of \eqref{sL}, we obtain from above that
\[\|\intT\pa_z\vu\|_{H^{k-1}(\mR^3)}\le CM_0\left({1\over T}+M_0 \right)\ep,\]
and together with  Lemma \ref{est:R2}, we prove \eqref{est:vu}

Similarly, substracting the second component of \eqref{sL} from the $\curl$ of the first component of \eqref{sL}, we obtain from \eqref{est:sL} that 
\[\|\intT\pa_z\curl\vb\|_{H^{k-2}(\mR^3)}\le CM_0\left({1\over T}+M_0\right)\ep\]
 and consequently, by  Lemma \ref{est:R2},
\be\label{est:curl:vb}\|\curl\intT\vb\|_{W^{k-4,\infty}(\mR^3)}\le CM_0\left({1\over T}+M_0\right)^{1\over2}\ep^{1\over2}.\ee

To ``remove'' the $\curl$ operator from \eqref{est:curl:vb},   we use $\vb=\cP\vb=-\curl\Delta^{-1}\curl\vb\implies(-\Delta)^{1\over2}\vb={\curl}( -\Delta)^{-{1\over2}}\curl\vb$ and the fact that $\curl(-\Delta)^{-{1\over2}} $ is a bounded mapping on any $W^{m,p}(\mR^3)$ space with $1<p<\infty$ (by properties of Fourier multipliers) to obtain
\[\|(-\Delta)^{1\over2}\intT\vb\|_{W^{m,p}(\mR^3)}\le C\|\curl\intT\vb\|_{W^{m,p}(\mR^3)} .\]
Then, apply the Hardy-Littlewood-Sobolev fractional integration theorem $\|g\|_{L^{3p\over3-p}}\le C\|(-\Delta)^{1\over2} g\|_{L^p}$ to the LHS and the interpolative H\"older's inequality $\|g\|_{L^{p}}\le\|g\|_{L^2}^{2\over p}\|g\|_{L^\infty}^{1-{2\over p}}$ to the RHS to arrive at, with $2<p<3$,
\[\ba&\quad\;\|\intT\vb\|_{W^{m,{3p\over3-p}}(\mR^3)}\\&\le C\|\curl\intT\vb\|_{W^{m,2}(\mR^3)}^{2\over p}\|\curl\intT\vb\|^{1-{2\over p}}_{W^{m,\infty}(\mR^3)}.\ea\]Finally, plug in \eqref{energy:MHD} and \eqref{est:curl:vb}, we complete the proof of \eqref{est:vb} by setting $m=k-4$, $1/p=1/3+1/s$.
\end{proof}

 \bigskip \section{\bf Appendices}\label{sec:app} 
\subsection{Geometric proof of Proposition \ref{thm:Navier}}
\begin{proof}
Throughout this proof,   let $\nm$ denote the outward normal at $\pa\Omega$ and let $\tg$ denote a typical tangent vector at $\pa\Omega$.

 By  identities \eqref{nabla:vu:ids}, we have,  \be\label{nm:tg:ids}\left\{\begin{aligned}\big((\grad\vu)\nm\big)\ccdot\tg&= (\nm\cn\vu)\ccdot\tg,\\\big((\grad\vu)\trsp\nm\big)\ccdot\tg&=( \tg\cn\vu)\ccdot\nm.\end{aligned}\right.  \ee
 
Next,  the assumption $\vu\ccdot\nm\bc=0$ implies $\tg\ccdot\grad(\vu\ccdot\nm)\bc=0$ so that by treating $\tg\ccdot\grad$ as a directional derivative and using the product rule, we have  at $\pa\Omega$, \[(\tg\cn\vu )\ccdot\nm=-(\tg\cn \nm) \ccdot\vu . \]  Combine it with \eqref{nm:tg:ids} to obtain, at a general smooth boundary $\pa\Omega$ with $\vu\ccdot\nm\bc=0$   \be\label{BC:equiv:0}\left\{\begin{aligned}\big((\grad\vu)\nm\big)\ccdot\tg&=\Big [{\nm\cn\vu}\Big]\ccdot\tg,\\\big((\grad\vu)\trsp\nm\big)\ccdot\tg&=-( \tg\cn\nm)\ccdot\vu.\end{aligned}.\right.  \ee

  Recall the definition $\SS=\grad\vu+(\grad\vu)\trsp$ and apply the above identities to obtain,
 \be\label{BC:equiv}(\SS\nm)\ccdot\tg=\Big [{\nm\cn\vu}\Big]\ccdot\tg-( \tg\cn\nm)\ccdot\vu.\ee
One can also write the above identity using   the vorticity. In fact, combine identities \eqref{BC:equiv:0}  with $(\curl\vu)\times\nm=(\nabla\vu-(\nabla\vu)\trsp)\nm$  to have  \[\big((\curl\vu)\times\nm\big)\ccdot\tg=\Big [{\nm\cn\vu}\Big]\ccdot\tg+( \tg\cn\nm)\ccdot\vu.\] Subtract it from \eqref{BC:equiv} to arrive at \be\label{Navier:curl}(\SS\nm)\ccdot\tg=\big((\curl\vu)\times\nm\big)\ccdot\tg-2( \tg\cn\nm)\ccdot\vu \ee

Now, regarding the $-\tg\cn\nm$ term, it is associated with the shape operator\footnote{http://mathworld.wolfram.com/ShapeOperator.html },
\[{\mathscr S}(\tg):=-\tg\cn \nm\]
which is a linear mapping in any given tangent plane of $\pa\Omega$. Then, the {\it symmetric} bilinear form 
\be\label{II}\II(\tg_1,\tg_2):={\mathscr S}(\tg_1)\ccdot\tg_2=-(\tg_1\cn\nm)\ccdot\tg_2,\ee
 defined for any two tangent vectors $\tg_1,\tg_2$ is the the second fundamental form\footnote{http://mathworld.wolfram.com/SecondFundamentalForm.html }. Its symmetry can be shown straightforward   e.g. by choosing the surface as a level set of scalar function $ g $, so that $\nm=\dfrac{\nabla  g }{|\nabla g |}$, which we will skip. Combining such symmetry with \eqref{BC:equiv}, \eqref{Navier:curl}  and the assumption $\vu\ccdot\nm\bc=0$, we prove \eqref{Navier:shapeO}.
 
To prove \eqref{shapeO:kappa}, we recall that the (orthonormal)  principal directions\footnote{https://en.wikipedia.org/wiki/Principal\_curvature} are  the two (orthonormal) eigenvectors of the shape operator 
\be\label{shapeO:eig}{\mathscr S}(\vce_i)=-\vce_i\cn \nm=\kappa_i\vce_i,\quad i=1,2,\ee
where eigenvalue $\kappa_i$ denotes the  principal curvature associated with $\vce_i$. They   effectively diagonalize $\II$, i.e. by definition \eqref{II}, $$ \II(\vce_i,\vce_j) =\begin{cases}\kappa_i,& i=j,\\
 0,&i\ne j.\end{cases}
$$  Combine \eqref{shapeO:eig},   \eqref{Navier:shapeO} to obtain  \eqref{shapeO:kappa}.

 Finally, for the special case of $\Omega$ being a spherical shell, any pair of orthonormal tangent vectors can serve as the  principal directions, and therefore \eqref{Navier:kappa:SH} easily follows from the fact that $\nm=\pm\ver$ at $r=1\pm\thick$ and from  \eqref{shapeO:eig} so that   $$\kappa_i =-(\vce_i\cn\nm )\cdot\vce_i=\mp{1\over r}\quad\text{ at }\;r=1\pm\thick.$$  
\end{proof}
\subsection{Energy and enstrophy estimates in a thin shell with Navier boundary conditions}

Let norm $\|\cdot\|$ stand for the $L^2(\Omega)$ norm and $\langle\cdot,\cdot\rangle$ for the $L^2(\Omega)$ inner product. 
\begin{proposition}\label{prop:energy:decrease}
Consider \eqref{NS:vec} subject to the Navier boundary conditions \eqref{BC:Navier} with $\lambda\ge0$ and $\vg \equiv{\bf0}$. Then, the energy $\|\vu\|(t)$ is decreasing with time.
\end{proposition}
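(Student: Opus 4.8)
The plan is to run the standard energy method: pair \eqref{NS:vec} with $\vu$ in $L^2(\Omega)$, differentiate $\tfrac12\|\vu\|^2$, and show that each resulting term is either identically zero or nonpositive. Working in the regime $\vFe\equiv\mathbf0$ (which is the only regime in which monotone dissipation can be expected), this pairing gives
\[\tfrac12\tfrac{d}{dt}\|\vu\|^2=-\intO\vu\ccdot\nabla_\vu\vu-\intO\vu\ccdot\grad q+\tfrac1\ep\intO\vu\ccdot(\vu\times\ver\cos\lat)+\mu\intO\vu\ccdot\Delta\vu,\]
and I will argue that the first three integrals vanish while the last is $\le0$.

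The three inviscid contributions are routine. The Coriolis integrand $\vu\ccdot(\vu\times\ver)$ vanishes pointwise, the cross product being orthogonal to $\vu$. For the pressure and advection terms I integrate by parts: writing $\vu\ccdot\nabla_\vu\vu=\tfrac12\vu\cn|\vu|^2$ for the nonlinearity, both volume integrals collapse by $\dv\vu=0$, and the accompanying boundary integrals carry the factor $\vu\ccdot\nm$, hence vanish by the zero-flux condition \eqref{BC:zero:f}.

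The heart of the matter, and the main obstacle, is the viscous term $\mu\intO\vu\ccdot\Delta\vu$, which is where the boundary conditions genuinely enter. Because $\dv\vu=0$ one has $\Delta\vu=\dv\SS$ (the transpose part of $\SS$ contributes $\grad\dv\vu=\mathbf0$), so integrating by parts for the symmetric tensor $\SS$, together with the algebraic identity $\nabla\vu:\SS=\tfrac12|\SS|^2$, yields
\[\intO\vu\ccdot\Delta\vu=-\tfrac12\intO|\SS|^2+\int_{\pa\Omega}\vu\ccdot(\SS\nm).\]
Since $\vu\ccdot\nm\bc=0$, the field $\vu$ is tangential on $\pa\Omega$, so only $[\SS\nm]_\ttan$ survives in the boundary integrand; invoking the shear-stress condition \eqref{BC:shear} with $\vg\equiv\mathbf0$, namely $[\SS\nm]_\ttan=-\lambda\vu$, that boundary term becomes $-\int_{\pa\Omega}\lambda|\vu|^2$. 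With $\lambda\ge0$ this is nonpositive, whence $\mu\intO\vu\ccdot\Delta\vu\le0$ and $\|\vu\|(t)$ is nonincreasing.

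I expect the delicate points to be twofold. First, the integration by parts above presumes enough regularity to take the trace of $\SS\nm$ on $\pa\Omega$; for a genuine weak solution this argument produces an energy \emph{inequality} rather than an identity, obtained by testing against $\vu$ in the admissible class and passing to the limit, which is sufficient for the monotonicity claim. Second, and conceptually, the indispensable ingredient is the sign condition $\lambda\ge0$ — exactly the point flagged in the discussion following Proposition \ref{thm:Navier} — since a negative $\lambda$ on any portion of $\pa\Omega$ would let the boundary term inject energy and destroy the estimate.
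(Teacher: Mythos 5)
Your proposal is correct and follows essentially the same route as the paper's own proof: the inviscid terms are killed by incompressibility, zero flux and the pointwise orthogonality of the Coriolis force, and the viscous term is handled by writing $\Delta\vu=\dv\SS$, integrating by parts with $\langle\SS,\nabla\vu\rangle=\tfrac12\|\SS\|^2$, and using the Navier condition \eqref{BC:shear} with $\vg\equiv\mathbf0$ and $\lambda\ge0$ to conclude $\int_{\pa\Omega}(\SS\nm)\ccdot\vu=-\int_{\pa\Omega}\lambda|\vu|^2\le0$. Your added remarks on taking $\vFe\equiv\mathbf0$ and on the weak-solution energy inequality are sensible clarifications but do not change the argument.
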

 \begin{proof}
Take the $L^2(\Omega)$ inner product of $\vu$ and the first equation of \eqref{NS:vec}, noting the Coriolis term  is perpendicular to $\vu$,
\be\label{LHS:Navier-Stokes}\ba \mu\langle \Delta\vu,\vu\rangle &=\langle \pt\vu,\vu\rangle+\langle \vu\cn\vu,\vu\rangle+\langle \grad q,\vu\rangle\\
&={1\over2}\pt\|\vu\|^2+\int_\Omega\big({1\over2}\vu\cn|\vu|^2+\vu\cn q\big) \\
&={1\over2}\pt\|\vu\|^2
\ea\ee
where the last step is due to the Divergence Theorem, zero-flux boundary condition $\vu\ccdot\nm\bc=0$ and $\dv\vu=0$.

Now,
it is useful to derive a  version of the Green's formula adapted to the Navier boundary conditions. First, use $\dv\vu=0$ to rewrite $\Delta\vu=\dv\SS$ so that 
\[\ba\langle \Delta\vu,\vu\rangle&=\langle\dv\SS,\vu\rangle
   =-\langle\SS,\nabla\vu\rangle+\int_{\pa\Omega}(\SS\nm)\ccdot\vu.
\ea\]
The second term is non-positive. In fact,
  at $\pa\Omega$, the Navier boundary conditions \eqref{BC:Navier} imply that $\vu$ is perpendicular to $\nm$ while $(\SS\nm+\lambda\vu)$ is parellel to $\nm$ for $\vg \equiv0$. Therefore, with $\lambda\ge0$,
\be\label{Sn:vu:neg}\int_{\pa\Omega}(\SS\nm+\lambda\vu)\ccdot\vu=0\text{\;\; so\;\;}\int_{\pa\Omega}(\SS\nm )\ccdot\vu\le 0.\ee
For the $\langle\SS,\nabla\vu\rangle$ term, use the definition of  inner-products between matrices and the fact $\SS\trsp=\SS$ to obtain,
$\langle\SS,\nabla\vu\rangle=\langle\SS\trsp,(\nabla\vu)\trsp\rangle=\langle\SS,(\nabla\vu)\trsp\rangle
. $ Therefore,
\[\langle\SS,\nabla\vu\rangle={1\over2}\langle\SS,\nabla\vu+(\nabla\vu)\trsp\rangle={1\over2}\|\SS\|^2.\]
So,  combine the above 3 equations to arrive at \[\langle\Delta\vu,\vu\rangle\le-{1\over2}\|\SS \|^2.\] Together with \eqref{LHS:Navier-Stokes}, it implies
\be\label{L2:est}\pt\|\vu\|^2\le-{\mu}\|\SS\|^2.\ee
The proof is complete.
\end{proof} 

To obtain some estimates on the total enstrophy $\|\curl\vu\|$, it suffices to estimate $\|\grad\vu\|^2$.
Simply taking the time integral of   the above inequality \eqref{L2:est} will however not yield estimate on $\int_0^T\|\grad\vu\|^2dt$ because $\SS$ lacks the information on the anti-symmetric part of $\nabla\vu$ (which actually conincides with $\curl\vu$). The remedy is to employ  another version of     Green's formula
\be\label{Green:0}\ba \langle \Delta\vu,\vu\rangle&=- \|\nabla\vu\|^2+ \int_{\pa\Omega}({\nm\cn\vu})\ccdot\vu\\
&=- \|\nabla\vu\|^2+ \int_{\pa\Omega}({\nm\cn\vuh})\ccdot\vuh,\ea\ee
where the zero-flux boundary condition $\vu\ccdot\nm\bc=0$ was also used.
For the boundary term above, apply the first equation of \eqref{Navier:kappa:SH} and the fact that $\nm=\pm \ver $ at the boundaries $r=1\pm\delta$ to obtain, 
\[{\nm\cn\vuh}\Bc =[\SS \nm  ]_\ttan  + (\nm\ccdot\ver){ \vuh\over  r}.\]
Then,  \[\begin{split}\int_{\pa\Omega}({\nm\cn\vuh})\ccdot\vuh&=\int_{\pa\Omega}\Big([\SS \nm  ]_\ttan  + (\nm\ccdot\ver){ \vuh\over  r}\Big)\ccdot\vuh\\&=\int_{\pa\Omega}\Big([\SS \nm  ]  + (\nm\ccdot\ver){ \vu\over  r}\Big)\ccdot\vu\\&\le\int_{\pa\Omega} (\nm\ccdot\ver){ \vu\over  r} \ccdot\vu \quad\cdots\text{ by  \eqref{Sn:vu:neg}}.\end{split}\]
Effectively, there are no more derivatives of $\vu$ in the boundary integral (indeed this  formula works for general domain). Thus, by  applying the divergence theorem to the right side above to obtain,
 \[\ba\int_{\pa\Omega}({\nm\cn\vuh})\ccdot\vuh\le\,& \int_\Omega\dv(\ver|\vu|^2r^{-1})\\&\le {C }\left({\|\nabla\vu\|\|\vu\| }+{\|\vu\|^2 }\right)\ea\]
where  we also used the   H\"older's inequality.
Substitute it into \eqref{LHS:Navier-Stokes}, \eqref{Green:0} to obtain
\[\ba{1\over2}\pt\|\vu\|^2&\le\mu\Big(-\|\nabla\vu\|^2+ {C }\big({\|\nabla\vu\|\|\vu\| }+{\|\vu\|^2 }\big)\Big)\\&\le\mu\Big(-{1\over2}\|\nabla\vu\|^2+{ {C }\|\vu\|^2 }\Big),\ea\]with a different constant $C$.
Together with the decrease of energy $\|\vu\|^2\le \|\vu_0\|^2$ due to \eqref{L2:est}, it implies 
\be\label{grad:u:est} \int_0^T\|\nabla\vu\|^2\,dt\le \|\vu_0\|^2\Big({1\over\mu }+{{C }T }\Big).\ee

\end{document}